\newtheorem{theorem}{Theorem}[section]
\newtheorem{proposition}[theorem]{Proposition}
\newtheorem{lemma}[theorem]{Lemma}
\newtheorem{corollary}[theorem]{Corollary}
\def\FRAME#1#2#3#4#5#6#7#8
\begin{document}

\title{On a recursive construction of Dirichlet form \\ on the Sierpi\'nski gasket}

\pagestyle{plain}

\author {Qingsong Gu}
\address{Department of Mathematics\\ The Chinese University of Hong Kong, Hong Kong, China}
\email{qsgu@math.cuhk.edu.hk}
\author {Ka-Sing Lau}
\address{Department of Mathematics\\ The Chinese University of Hong Kong, Hong Kong, China\\
\& Department of Mathematics, University of Pittsburgh, Pittsburgh, Pa. 15217, U.S.A.}
\email{kslau@math.cuhk.edu.hk}
{\author {Hua Qiu}}
\address {Department of Mathematics\\ Nanjing University, Nanjing, China}
\email {huaqiu@nju.edu.cn}

\subjclass[2010]{Primary 28A80; Secondary 46E30, 46E35}
\keywords{Dirichlet form, eigenvalue distribution, energy, harmonic functions, resistance, Sierpi\'nski gasket}
\thanks {The research of the first two authors were supported in part by the HKRGC grant; the second author was also support by NNSF of China (no. 11371382);  the third author was supported by NSF of China (no. 11471157).}

\maketitle
\begin{abstract}  {Let $\Gamma_n$ denote the $n$-th level Sierpi\'nski graph of the Sierpi\'nski gasket $K$. We consider, for any given conductance $(a_0, b_0, c_0)$ on $\Gamma_0$,  the Dirchlet form ${\mathcal E}$ on $K$ obtained from a recursive construction of compatible sequence of conductances $(a_n, b_n, c_n)$ on $\Gamma_n, n\geq 0$. We prove that there is a dichotomy situation: either $a_0= b_0 =c_0$ and  ${\mathcal E}$ is the standard Dirichlet form, or $a_0 >b_0 =c_0$ (or the two symmetric alternatives),  and  ${\mathcal E}$ is a non-self-similar Dirichlet form independent of $a_0, b_0$. The second situation has also been studied in \cite {HHW, HJ} as a one-dimensional asymptotic diffusion process on the Sierpi\'nski gasket. For the spectral property, we give a sharp estimate of the eigenvalue distribution of the associated Laplacian, which improves a similar result in \cite {HJ}. }
\end{abstract}

\maketitle
\section{\bf Introduction}
\setcounter{equation}{0}\setcounter{theorem}{0}

Dirichlet forms play a central role in the analysis on fractals. There is a large literature on the topic based on Kigami's analytic approach on the {\em post critically finite (p.c.f.) self-similar sets}, and the probabilistic approach of  Lindstr\o m on the nested fractals as well as Barlow and Bass on the Sierpi\'nski carpet (see \cite{B,BB, BBKT, FS, HMT, J, K1, K2, L, Pe1,PP, S} and the references therein).  In those studies, the  Sierpi\'nski gaskets and carpets are always served as fundamental examples, and are a source of inspiration.

\medskip

Recall that a {\it Sierpi\'nski gasket} (SG) is the unique nonempty compact set $K$ in $\mathbb{R}^2$ satisfying $K=\bigcup_{i=1}^3F_i(K)$ for an iterated function system (IFS)  $\{F_i\}_{i=1}^3$  on $\mathbb{R}^2$ such that $F_i(x)=\frac{1}{2}(x-p_i)+p_i$ with non-collinear $p_i$'s. For convenience, we fix $p_1=0$, $p_2=1$,  $p_3=\exp\left(\frac{\pi\sqrt{-1}}{3}\right)$.  Denote by  $V_0=\{p_1,p_2,p_3\}$ the {\em boundary} of $K$, and  let  $F_\omega=F_{\omega_1}\circ\cdots\circ F_{\omega_n}$ for a word $\omega\in W_n=\{1,2,3\}^n$.
The {\em standard Dirichlet form} ($\mathcal{E},\mathcal{F}$) on the SG is well-known  \cite{K, S}:  the energy ${\mathcal E}$ and the domain  ${\mathcal F}$ are given by
\begin{align} \label{eq1.1}
\mathcal{E}(u)=\lim\limits_{n\rightarrow\infty}\left(\frac53\right)^n\sum\limits_{p\sim_n q}(u(p)-u(q))^2,\quad
\mathcal{F}&=\{u\in C(K):\ \mathcal{E}(u)<\infty\}
\end{align}
where $p\sim_n  q$ means  $p \not=q$ and $p,q\in F_{\omega}(V_0)$ for some $\omega \in W_n$.  The domain $\mathcal{F}$ is known to be some Besov type space \cite{J}. In  \cite{SOB}, Sabot  classified all the Dirichlet forms on the SG which satisfy the energy self-similar identity
 \begin{equation} \label {eq1.2}
 \mathcal{E}(u)=\sum_{i=1}^3\frac1{r_i}\mathcal{E}(u\circ F_i),
 \end{equation}
where $r_i$, $i=1,2,3$ are some positive numbers called the {\em renormalization factors} of the energy form. The energy self-similar identity for the p.c.f fractals and nested fractals is also studied in detail in \cite{K}.

\medskip

More generally, one can also consider Dirichlet form without satisfying the energy self-similar identity. Let $\Gamma_0$ be the complete graph on $V_0$ and for $n\geq 1$, $\Gamma_n$  the graph on $V_n$ which is defined inductively by $V_n=\bigcup_{i=1}^3 F_i (V_{n-1})$ with the edge relation $\sim_n$ defined as in \eqref{eq1.1}.  Let $l(V_n)$ be the collection of functions defined on $V_n$, and let  $(\mathcal{E}_n, l(V_n))$ be defined by
\begin{equation} \label{eq1.3}
\mathcal{E}_n (u)={\sum}_{p\sim_n q}c^{(n)}_{pq}(u(p)-u(q))^2,   \quad u \in \ell (V_n),
\end{equation}
where  $c^{(n)}_{pq} \geq 0 $, call it the {\it conductance} of $p$ and $q$ in $\Gamma_n$. In the case that $\mathcal{E} (u) := \lim_{n\to \infty} \mathcal{E}_n (u) < \infty$ exists for  $u$ on $V_* = \bigcup_{n=0}^\infty V_n$, it will allow us to define a Dirichlet form on the SG.  For the limit to exist, the key issue is that the sequence of ${\mathcal E}_n$'s are compatible: {\it the restriction of ${\mathcal E}_n$ to $\ell (V_{n-1})$ must be equal to ${\mathcal E}_{n-1}, \ n \geq 1$}.

\medskip

In an attempt to produce all the Dirichlet forms (include the  non-self-similar ones),  Meyers, Strichartz and Teplyaev \cite {MST} used the compatibility condition to solve a system of linear equations of  conductances  on $V_1$ ($9$ of them) in terms of those on  $V_0$  as well as the given values of the harmonic functions on $V_1\setminus V_0$, then extend this inductively.  However the setup is too general and the  expressions are rather complicated, it does not give much information on the structure of the limiting Dirichlet form.  Recently two of the authors studied some anomalous p.c.f. fractals  in regard to the domains of the  Dirichlet forms and the associated Besov spaces \cite {GL}. In their investigation, a construction of the non-self-similar energy form was considered, and some interesting properties were found (see Section 4). In this note we intend to use the SG to study this construction  in greater detail  so as to give more insight to the general cases.

\bigskip

For this class of Dirichlet form on the SG,  we require the conductances of the  cells $F_\omega(V_0)$ on the same level $|\omega| = n$ are the same, and we will give a necessary and sufficient condition for the existence of a compatibility sequence  $\{{\mathcal E}_n\}_n$. The tool we use is the well-known electrical network theory.  The energy   $\mathcal{E}_n (u)$ in \eqref{eq1.3} corresponds to an electrical network $R(\Gamma_n)$ with {\it resistance} $r^{(n)}_{pq}= (c^{(n)}_{pq})^{-1}$, and $u$ is the {\it potential} on $V_n$. The sequence of networks $\{ R(\Gamma_n)\}_{n=0}^\infty$ are said to be  {\it compatible} if the resulting resistance of $R(\Gamma_n)$ on $V_{n-1}$ equals  $R(\Gamma_{n-1}), \ n\geq 1$. Note that this is equivalent to the compatibility of the sequence of energy forms ${\mathcal E}_n, n\geq 0$.

\bigskip

Let $(a_0, b_0, c_0)$ be the conductance on $V_0$, and let $(a_n, b_n, c_n)$ be the conductances of $F_\omega (V_0), |\omega|=n, n \geq 1$ to be determined. By the well-known $\Delta-Y$ transform \cite {K,S}, the resistances $(a_n^{-1}, b_n^{-1}, c_n^{-1})$ on the $\Delta$-side is equivalent  to a set of  resistances  $(x_n,y_n, z_n)$ on the $Y$-side.  It is direct to show (use \eqref{eq2.1} and refer to Figure \ref{fig2}) that $\{R(\Gamma_n)\}_{n=0}^\infty$  are compatible   can be reduced to  $\{(x_n,y_n,z_n)\}_{n\geq0}$ satisfy
\begin{equation}\label{eq1.4}
\begin{cases}
x_{n-1}=x_n + \phi(x_n; y_n, z_n),\\
y_{n-1}=y_n+\phi(y_n; z_n, x_n),\\
z_{n-1}=z_n+\phi(z_n; x_n, y_n),
\end{cases}  \qquad n\geq1 ,
\end{equation}
 where $\phi(x_n; y_n, z_n) := \frac{(x_n+y_n)(x_n+z_n)}{2(x_n+y_n+z_n)}$,  and symmetrically for the other two.
 We will refer to finding the solution of $(x_n, y_n, z_n)$ from $(x_{n-1}, y_{n-1}, z_{n-1})$  as a {\it recursive construction} of the energy form ${\mathcal E}_n$.  Necessarily, $(x_n,y_n,z_n)$ has to be positive, and the following is a necessary and sufficient condition for this to hold.  Let $\mathcal{E}^{(a_0, b_0)}_n := \mathcal{E}_n$ be defined as in \eqref{eq1.3} with conductances $(a_n, b_n, c_n)$ on each $n$-level subcells.

\bigskip

\begin {proposition} \label {th1.1}  For $a_0, b_0, c_0 >0$, in order for \eqref {eq1.4} to have positive solutions $(x_n, y_n, z_n), n\geq 1$,  it is necessary and sufficient that  $ x_0 \geq y_0=z_0> 0$ (or the symmetric alternates).

In this case, $  x_n \geq y_n =  z_n >0, \ n\geq 0$ and  $\{(x_n,y_n,z_n)\}_{n\geq0}$ is uniquely determined by the initial data $(x_0,y_0,z_0)$.
\end{proposition}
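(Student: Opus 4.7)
The strategy has three parts: (I) show the equality-pattern among $(x_n, y_n, z_n)$ is preserved by the recursion; (II) reduce the ``two equal'' case to a scalar dynamical system and analyze it explicitly; (III) rule out orbits in which all three coordinates are distinct.

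For (I), direct manipulation of \eqref{eq1.4} gives
$$y_{n-1} - z_{n-1} \;=\; (y_n - z_n)\!\left(1 + \frac{y_n + z_n}{2(x_n + y_n + z_n)}\right),$$
together with two symmetric identities for $x_{n-1}-y_{n-1}$ and $x_{n-1}-z_{n-1}$. The multiplier is strictly greater than $1$ for positive data, so $y_{n-1}=z_{n-1}$ iff $y_n=z_n$ and similarly for strict inequalities; hence the partition of $\{x_n,y_n,z_n\}$ into equal/unequal pairs is independent of $n$.

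For (II), assume WLOG $y_n=z_n$ for all $n$. Then \eqref{eq1.4} collapses to a $2$-dimensional system, and writing $t_n := x_n/y_n$ a short calculation yields $t_{n-1}=f(t_n)$ with
$$f(t) := \frac{3t^2 + 6t + 1}{2(2t+3)}.$$
A direct check gives $f'(t)=(6t^2+18t+16)/[2(2t+3)^2]>0$ (the numerator has negative discriminant), so $f$ maps $[0,\infty)$ strictly increasingly onto $[1/6,\infty)$; the identity $f(t)-t=(1-t^2)/[2(2t+3)]$ shows $t=1$ is the unique positive fixed point, with $f(t)>t$ on $(0,1)$ and $f(t)<t$ on $(1,\infty)$. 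Inverting, $t_n = f^{-1}(t_{n-1})$: if $t_0\ge 1$ then $t_n\ge 1$ for all $n$, giving a unique positive orbit; if $t_0\in(0,1)$ then $\{t_n\}$ is strictly decreasing, and since the forward iterates $\{f^N(0)\}$ increase to the fixed point $1$ from below, some $t_N$ is forced $\le 0$, contradicting positivity. Thus positivity for all $n$ in the two-equal case is equivalent to $t_0\ge 1$, i.e.\ $x_0\ge y_0=z_0$, and the orbit is uniquely determined by inverting $f$ and solving $y_{n-1} = y_n(2t_n+3)/(t_n+2)$ for $y_n$.

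For (III), assume $x_n>y_n>z_n>0$ for all $n$. Consider $Q_n := (y_n-z_n)/(x_n-z_n)\in(0,1)$; from (I) one computes
$$Q_{n-1} \;=\; Q_n\cdot \frac{3s_n-x_n}{3s_n-y_n}, \qquad s_n := x_n+y_n+z_n,$$
and the factor is $<1$ since $x_n>y_n$; hence $\{Q_n\}$ is strictly increasing to some $Q^*\in(0,1]$. The telescoping $Q_n-Q_{n-1}\to 0$ then forces $\tilde x_n-\tilde y_n\to 0$ on the normalized simplex ($\tilde x_n := x_n/s_n$, etc.), so the orbit's shape approaches the edge $\{\tilde x=\tilde y\}$ from the side where $\tilde z<\tilde x=\tilde y$. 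This is exactly the ``bad'' regime $\tau<1$ of the $1$-dimensional reduction of (II) applied to $\{x=y\}$ (with $\tau=z/x$), where orbits fail in finite time. A continuity/perturbation argument comparing the $3$D orbit with this failing $1$D limit dynamics then produces the required contradiction. This last step is the main obstacle: the monotonicity of $Q_n$ identifies where the orbit is asymptotically headed, but rigorously transferring the finite-time failure of the limiting $1$D dynamics to the nearby $3$D orbit requires careful quantitative comparison; steps (I) and (II) are essentially algebraic.
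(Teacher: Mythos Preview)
Your parts (I) and (II) are correct and in fact cleaner than the paper's treatment: the factorization
\[
y_{n-1}-z_{n-1}=(y_n-z_n)\Bigl(1+\tfrac{y_n+z_n}{2(x_n+y_n+z_n)}\Bigr)
\]
makes the invariance of the equality/order pattern transparent, and the one-dimensional reduction via $t_n=x_n/y_n$, $t_{n-1}=f(t_n)$ gives a tidy proof of both sufficiency and uniqueness in the two-equal case. The paper instead solves the quadratic explicitly for $(x_n,y_n)$ in terms of $(x_{n-1},y_{n-1})$ and checks positivity and the inequality $x_n\ge y_n$ by hand.

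Part (III), however, is genuinely incomplete, and the vague ``continuity/perturbation'' step you flag is exactly where the argument stalls. The paper closes the all-distinct case by a different and more direct route: assuming $x_n>y_n>z_n>0$ for all $n$, it proves the quantitative growth estimate
\[
\frac{2x_n}{y_n+z_n}\;\ge\;\lambda_0\,\rho^n,\qquad \rho=\tfrac15\bigl(6-\lambda_0^{-1}\bigr)>1,\quad \lambda_0=\tfrac{2x_0}{y_0+z_0},
\]
together with $y_n/z_n\to\infty$, and then reads off from the recursion that for large $n$ one has $z_n\approx \tfrac34 z_{n-1}-\tfrac14 y_{n-1}$, which is eventually negative. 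No limiting-dynamics comparison is needed.

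It is worth noting that your $Q_n$-monotonicity and the paper's growth of $\lambda_n$ are, under the contradiction hypothesis, already \emph{mutually exclusive}: your argument correctly gives $\tilde x_n-\tilde y_n\to 0$, whereas $\lambda_n\to\infty$ forces $\tilde x_n\to 1$ and $\tilde y_n\to 0$, hence $\tilde x_n-\tilde y_n\to 1$. So if you supplement (III) with the single elementary inequality $\lambda_n\ge\lambda_{n-1}\cdot\tfrac15(6-\lambda_{n-1}^{-1})$ (this is the content of step~(iii) in the paper's Lemma~\ref{th2.1}), your $Q_n$-analysis already delivers the contradiction, and the perturbation argument becomes unnecessary.
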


\medskip

 The proposition will be proved in Lemmas \ref{th2.1}, \ref {th2.2}.  We let  $\mu$ be the normalized $\alpha$-Hausdorff measure on  $K$ with $\alpha = \frac {\log 3}{\log 2}$. For two functions $f, g\geq 0$,  we use  $f \asymp g$ to mean that they dominate each other by a positive constant.    As a consequence of Proposition \ref{th1.1}, we have the following theorem.

\medskip

\begin{theorem} \label {th1.2} For the case $x_0>y_0=z_0>0$ in the above proposition,  we have $a_0 >b_0 =c_0$ and
\begin{equation*}
 a_n=\frac{x_n}{y_n(2x_n+y_n)}\asymp 2^n, \qquad
b_n = c_n= \frac 1 {2x_n+y_n}\asymp \left(\frac32\right)^n.
\end{equation*}
Moreover  ${\mathcal E}^{(a_0,b_0)}(u) =\lim\limits_{n\rightarrow\infty} {\mathcal E}^{(a_0,b_0)}_n(u)$ defines a strongly local regular Dirichlet form on $L^2(K, \mu)$ with domain ${\mathcal F}$ independent of $(a_0, b_0)$; it satisfies
\begin{equation} \label{eq1.5}
\mathcal{E}^{(a_0,b_0)}(u)=\sum\limits_{i=1}^3\mathcal{E}^{(a_1,b_1)}(u\circ F_i),
\end{equation}
but  does not satisfy the energy self-similar identity.
\end{theorem}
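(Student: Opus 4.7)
The plan exploits the one-variable reduction from Proposition \ref{th1.1}: under $z_n = y_n$, the recursion \eqref{eq1.4} collapses to a coupled two-variable system whose asymptotics govern everything else. I would first apply the inverse $Y$--$\Delta$ transform to the star data $(x_n, y_n, y_n)$: writing $\Sigma := x_n y_n + y_n z_n + z_n x_n = y_n(2x_n + y_n)$, one has $a_n = x_n/\Sigma = x_n/(y_n(2x_n + y_n))$ and $b_n = c_n = y_n/\Sigma = 1/(2x_n + y_n)$, yielding the stated closed forms; specializing at $n = 0$ immediately gives $a_0 > b_0 = c_0$ from $x_0 > y_0$.

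The main computational step is the asymptotic analysis. Setting $t_n := y_n/x_n \in (0, 1)$ and substituting $z_n = y_n$ into \eqref{eq1.4} gives
\[
x_{n-1} = \frac{3x_n^2 + 6x_n y_n + y_n^2}{2(x_n + 2y_n)}, \qquad y_{n-1} = \frac{y_n(2x_n + 3y_n)}{x_n + 2y_n},
\]
from which
\[
t_{n-1} = f(t_n), \qquad f(t) := \frac{2t(2 + 3t)}{3 + 6t + t^2}.
\]
A short calculation gives $f(t) - t = t(1 - t^2)/(3 + 6t + t^2) > 0$ on $(0, 1)$, so $\{t_n\}$ is strictly decreasing; being bounded below by $0$ and $f$ having $0$ as the only fixed point in $[0, 1)$, $t_n \downarrow 0$. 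Then $x_{n-1}/x_n = (3 + 6t_n + t_n^2)/(2(1 + 2t_n)) \to 3/2$ and $y_{n-1}/y_n = (2 + 3t_n)/(1 + 2t_n) \to 2$, so $x_n \asymp (2/3)^n$ and $y_n \asymp (1/2)^n$; using $2x_n + y_n \asymp x_n$, the stated estimates $a_n \asymp 1/y_n \asymp 2^n$ and $b_n \asymp 1/x_n \asymp (3/2)^n$ follow.

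For the Dirichlet form assertions, compatibility of $\{\mathcal{E}_n\}$ makes $\mathcal{E}_n(u|_{V_n})$ monotone nondecreasing in $n$ for every $u \in C(K)$, so $\mathcal{E}^{(a_0, b_0)}(u) := \lim_n \mathcal{E}_n(u|_{V_n})$ is well-defined in $[0, \infty]$ and $\mathcal{F}$ is a vector space. Varying $(a_0, b_0)$ only shifts the multiplicative constants in the asymptotics $a_n \asymp 2^n$ and $b_n \asymp (3/2)^n$, which are uniform in $n$; hence $\mathcal{E}_n^{(a_0, b_0)}$ and $\mathcal{E}_n^{(a_0', b_0')}$ are uniformly comparable, yielding the same $\mathcal{F}$. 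The strongly local regular Dirichlet form claim then follows from Kigami's general framework: closedness via lower semicontinuity of $\sup_n \mathcal{E}_n$, Markovianity from the discrete structure, regularity from density of piecewise harmonic extensions in $C(K)$, and strong locality from the nearest-neighbor edge structure of each $\mathcal{E}_n$.

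Identity \eqref{eq1.5} is built into the construction: decomposing $\omega \in W_n$ as $i \omega'$ with $\omega' \in W_{n-1}$,
\[
\mathcal{E}_n^{(a_0, b_0)}(u) = \sum_{i = 1}^3 \sum_{\omega' \in W_{n-1}} (\text{cell energy at } F_i F_{\omega'} \text{ with conductance } (a_n, b_n, c_n)) = \sum_{i=1}^3 \mathcal{E}_{n-1}^{(a_1, b_1)}(u \circ F_i),
\]
and $n \to \infty$ preserves the identity. To disprove the energy self-similar identity $\mathcal{E}(u) = \sum_i r_i^{-1} \mathcal{E}(u \circ F_i)$, I would use that the hypothesis of uniform level-$n$ cell conductances $(a_n, b_n, c_n)$ is built into the construction: iterating any such self-similar identity gives cell conductance $r_\omega^{-1}(a_0, b_0, c_0)$ on $F_\omega$ with $r_\omega = \prod r_{\omega_i}$, so uniformity over $|\omega| = n$ forces $r_1 = r_2 = r_3 = r$ and $(a_n, b_n, c_n) = r^{-n}(a_0, b_0, c_0)$. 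In particular $a_n/b_n = a_0/b_0$ would be constant, contradicting $a_n/b_n = 1/t_n \to \infty$ from Part 2. The asymptotic analysis is the principal technical obstacle—especially establishing uniformity of the $\asymp$ constants for the domain-independence step—while the Dirichlet form mechanics fit the standard Kigami template.
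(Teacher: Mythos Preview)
Your approach is correct and largely parallel to the paper's, with two genuine differences worth noting.

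For the asymptotics, the paper also works with the ratio $y_n/x_n$, but it proceeds via the explicit inversion formulas of Lemma~\ref{th2.2} (expressing $x_n,y_n$ in terms of $x_{n-1},y_{n-1}$) together with the exponential estimate \eqref{eq2.8} from Lemma~\ref{th2.1} to obtain geometric decay of $y_n/x_n$. Your single-variable recursion $t_{n-1}=f(t_n)$ is tidier, but one step is glossed over: from $t_n\downarrow 0$ and $x_{n-1}/x_n\to 3/2$ alone you cannot conclude $x_n\asymp(2/3)^n$; you need $t_n\to 0$ \emph{geometrically} so that the corrections to the ratio are summable. This does follow from your setup once you observe that $f(t)/t = 2(2+3t)/(3+6t+t^2)$ is strictly decreasing on $(0,1)$, whence $t_n/t_{n-1}=t_n/f(t_n)\le t_0/f(t_0)<1$ for all $n$. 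That one line closes the gap (and, as you yourself flag, it is exactly what is needed for the uniform $\asymp$ constants behind the domain-independence claim).

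For non-self-similarity your argument genuinely differs from the paper's. The paper invokes Sabot's uniqueness theorem: equal $r_i$ would force the form to be the standard one, contradicting $a_0>b_0$. You instead argue directly that a self-similar identity together with the uniform-cell hypothesis of the construction forces $(a_n,b_n,c_n)=r^{-n}(a_0,b_0,c_0)$, hence $a_n/b_n$ constant, contradicting $a_n/b_n=1/t_n\to\infty$. Your route is more elementary and self-contained; the paper's buys brevity by outsourcing to \cite{SOB}. The remaining Dirichlet-form assertions (closedness, regularity via piecewise harmonics, strong locality, and the scaling identity \eqref{eq1.5}) are handled the same way in both.
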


\bigskip

 It follows that for initial data $x_0\geq y_0=z_0>0$ on $\Gamma_0$, the recursive construction gives a dichotomy result on the Dirchlet forms:  when $a_0= b_0=c_0>0$,  then $\mathcal{E}^{(a_0,b_0)}$ is the standard Dirichlet form in \eqref{eq1.1};  when $a_0> b_0 (=c_0) >0$, then by the above estimation of $a_n$ and $b_n (= c_n)$, we have
\begin{align*}
&{\mathcal E}^{(a_0,b_0)}(u)\asymp\\
 &{\small \sup\limits_{n\geq0}\ \left\{ 2^n\sum\limits_{\omega\in W_n}\left (\Big(u_\omega(p_2)-u_\omega(p_3)\Big)^2 + \left(\frac34\right)^n\Big(u_\omega(p_1)-u_\omega(p_2)\Big)^2+\left(\frac34\right)^n\Big(u_\omega(p_1)-
 u_\omega(p_3)\Big)^2 \right )\right \} ,}
\end{align*}
where $u_\omega(x)=u\circ F_\omega(x)$.
It is seen that there  are two scaling factors in ${\mathcal E}^{(a_0, b_0)}$. The  renormalizing factor is $2^n$, and the energy is basically concentrated on the $\overline {p_2p_3}$ direction.

\bigskip

For this Dirichlet form ${\mathcal E}^{(a_0, b_0)}$ with $a_0>b_0$,  we can give a sharp estimate of the distribution of the eigenvalues (Section 3).  Let $\Delta^{(a_0,b_0)}$ be the {\em Laplacian}, the infinitesimal generator of $\left(\mathcal{E}^{(a_0,b_0)},\mathcal{F}\right)$ on $L^2(K,\mu)$.
Denote by $\rho^{(a_0,b_0)}(t)$ the eigenvalue count with the  {\it Dirichlet boundary condition} (D.B.C), that is
\begin{equation} \label{eq1.6}
\rho^{(a_0,b_0)}(t)=\#\Big\{\lambda \leq t: \lambda \text{ is an eigenvalue of $-\Delta^{(a_0,b_0)}$ with D.B.C.}\Big\},
\end{equation}

\medskip

\begin {theorem} \label {th1.3}
Assume that $a_0>b_0=c_0$,  and let  $t_0 = \inf\{t: \rho^{(a_0,b_0)}(t) >0\}$, then
\begin{equation*}
 \rho^{(a_0,b_0)}(t) \asymp t^{\frac{\log3}{\log(9/2)}}, \qquad  t > t_0.
\end{equation*}
\end{theorem}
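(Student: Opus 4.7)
The plan is to iterate a Dirichlet-Neumann bracketing along the recursion supplied by the semi-self-similar identity \eqref{eq1.5}. The key observation is that if $\lambda$ is a (Dirichlet or Neumann) eigenvalue of $-\Delta^{(a_0,b_0)}$ with eigenfunction $u$, then, by $\mathcal{E}^{(a_0,b_0)}(u) = \sum_i \mathcal{E}^{(a_1,b_1)}(u\circ F_i)$ together with the measure scaling $\int f\, d\mu = \frac{1}{3}\sum_i \int (f\circ F_i)\, d\mu$, each $u\circ F_i$ is ``locally'' an eigenfunction of $-\Delta^{(a_1,b_1)}$ with eigenvalue $\lambda/3$, the only caveat being the matching condition on $V_1\setminus V_0$. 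Replacing this matching by Dirichlet resp.\ Neumann data on $V_1\setminus V_0$ gives the one-step bracketing
\[
3\,\rho^{(a_1,b_1),D}(t/3) \;\leq\; \rho^{(a_0,b_0)}(t) \;\leq\; 3\,\rho^{(a_1,b_1),N}(t/3) + C_0,
\]
where $C_0$ depends only on $|V_1\setminus V_0|=3$, and $\rho^{(\cdot),D}, \rho^{(\cdot),N}$ denote the eigenvalue counts with Dirichlet resp.\ Neumann boundary data on $V_0$.

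Iterating this inequality $k$ times, using $(a_{j-1},b_{j-1})\to(a_j,b_j)$ at the $j$-th step, produces
\[
3^k\,\rho^{(a_k,b_k),D}(t/3^k) - E_k \;\leq\; \rho^{(a_0,b_0)}(t) \;\leq\; 3^k\,\rho^{(a_k,b_k),N}(t/3^k) + E_k,
\]
with accumulated boundary error $E_k = O(3^k)$. For a given large $t$, I would choose $k=k(t)$ so that $t/3^k$ is of the order of the first non-trivial Dirichlet eigenvalue of $-\Delta^{(a_k,b_k)}$. The sup-expression for $\mathcal{E}^{(a_k,b_k)}$ combined with the asymptotics $a_k\asymp 2^k$, $b_k=c_k\asymp (3/2)^k$ from Theorem \ref{th1.2} shows that this first eigenvalue is governed by the weak direction and is of order $(3/2)^k$. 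Matching $t/3^k \asymp (3/2)^k$ yields $(9/2)^k\asymp t$, so $3^k \asymp t^{\log 3/\log(9/2)}$, which is exactly the claimed exponent.

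The main obstacle is to secure, uniformly in $k$, a two-sided estimate of the form $C_1 \leq \rho^{(a_k,b_k),D}\bigl(M(3/2)^k\bigr)$ and $\rho^{(a_k,b_k),N}\bigl(M(3/2)^k\bigr)\leq C_2$ for suitable fixed constants $M,C_1,C_2$, so that $3^kC_1$ dominates $E_k$ in the lower bound and $3^kC_2$ controls the upper bound. The heuristic for such uniform control is that as $k\to\infty$ the anisotropy $a_k/b_k\sim (4/3)^k\to\infty$ forces every low-energy function to be essentially constant along each ``strong'' $\overline{p_2p_3}$-edge, so the low-frequency part of the spectrum approaches that of a one-dimensional diffusion, matching the asymptotic process described in \cite{HHW,HJ}. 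I would make this rigorous by an explicit energy comparison between $\mathcal{E}^{(a_k,b_k)}$ and a quasi-one-dimensional reference form, together with the min-max principle; combined with the iterated bracketing, this produces the two-sided estimate $\rho^{(a_0,b_0)}(t)\asymp t^{\log 3/\log(9/2)}$ for $t>t_0$.
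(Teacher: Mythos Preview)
Your overall strategy---iterated Dirichlet/Neumann bracketing along the identity $\mathcal{E}^{(a_0,b_0)}(u)=\sum_i\mathcal{E}^{(a_1,b_1)}(u\circ F_i)$, then choosing the depth $k$ so that $t/3^k$ matches the first eigenvalue of $-\Delta^{(a_k,b_k)}$---is exactly the paper's approach (Lemma~\ref{th3.4} and Theorem~\ref{th3.5}). The bracketing is in fact cleaner than you state: the Dirichlet side $3^k\rho^{(a_k,b_k)}(t/3^k)\le\rho^{(a_0,b_0)}(t)$ carries no error term, and on the Neumann side one works with $\rho_N$ throughout and invokes the gap $\rho_N\le\rho_D+3$ only once at the end (see \eqref{eq3.1}), so the accumulated boundary error is $O(1)$, not $O(3^k)$. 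This matters, because with your $E_k=O(3^k)$ the lower bound $3^kC_1-E_k$ is only useful if $C_1$ beats the implicit constant in $E_k$, which you have not arranged.

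Where your proposal genuinely diverges from the paper is at the step you correctly flag as the main obstacle: the uniform-in-$k$ two-sided bound on $\lambda_1^{(a_k,b_k)}$. Your suggested route---compare $\mathcal{E}^{(a_k,b_k)}$ with a quasi-one-dimensional reference form as $a_k/b_k\to\infty$---is plausible but not carried out, and making the lower bound $\lambda_1^{(a_k,b_k)}\ge C^{-1}(3/2)^k$ uniform this way would require quantitative control of that limit. The paper instead proves directly (Lemma~\ref{th3.3}) that $C^{-1}b\le\lambda_1^{(a,b)}\le Cb$ for \emph{every} initial datum $a>b=c$, with $C$ independent of $a,b$. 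The upper bound is by a single $1$-piecewise harmonic test function; the lower bound comes from the uniform resistance-diameter estimate $\sup_{x,y}R^{(a,b)}(x,y)\le C'b^{-1}$ of Proposition~\ref{th2.3}, which immediately yields $\|u\|_2^2\le C'b^{-1}\mathcal{E}^{(a,b)}(u)$ for all $u\in\mathcal{F}_0$ and hence $\lambda_1^{(a,b)}\ge (C')^{-1}b$. This resistance argument is short and avoids any one-dimensional comparison; it is the key input you are missing.
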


\bigskip

We remark that in another investigation, K. Hattori, T. Hattori and Watanabe \cite {HHW} studied the asymptotically  one-dimensional diffusion processes on the SG (see also Hambly and Jones \cite{HJ}, Hambly and Yang \cite{HY}). The random walk they considered is in fact  the normalized probability of $(a_n, b_n, b_n)$  as transition probability on the three sides of the $n$-level cells of the SG. (They used this as an assumption, and in fact it is one of the dichotomy cases from Theorem \ref {th1.2} (or Proposition \ref{th1.1}).)  We will give a brief comparison of these two approaches in Section 2.   For the estimate of the eigenvalue distribution in  Theorem \ref{th1.3}, it improves  the lower bound  of $\rho^{(a_0, b_0)}(t)$  in  \cite  [Theorem 13] {HJ}  where it was shown to be $C^{-1}t^{\log 3 / \log (9/2)}(\log t)^{-\beta} $ with $\beta > \log 3/\log 2$, using a heat kernel technique in the estimation.

\bigskip

The recursive construction can be extended to more general p.c.f. sets (see \cite {GL} for some examples), but it also have limitation. In Section 4, we give two other examples that this construction have abnormality. The first one is the {\it twisted SG} introduced by Mihai and Strichartz \cite {MS}, it is a modification of the IFS of the SG that reflecting  the three subcells of the SG along the angle bisectors at the three vertices. We show that  for $a_0 > b_0 = c_0$, the closure of $V_*$ under the (effective) resistance metric has interesting topology different from the SG; the second one is from \cite {GL}, it is called a {\it Sierpinski sickle},  which  is the attractor of an  IFS of $17$ similitudes and three boundary points, of which the recursive construction does not yield a compatible sequence for a Dirichlet form.

\bigskip
\bigskip

\section{\bf Proof of Theorem \ref{th1.2}}
\setcounter{equation}{0}\setcounter{theorem}{0}
\bigskip

Let $(a, b, c)$ denote the conductance of a $\Delta$-shape network. Recall the {\em $\Delta$-Y transform} (see e.g., \cite {K}, \cite{S}) states that
the $\Delta$-shaped network with resistance $(a^{-1}, b^{-1}, c^{-1})$ and the Y-shaped network with resistance $(x,y,z)$  (see Figure \ref{fig1})  are equivalent by the following relation
\begin{equation} \label{eq2.1}
  x =\frac{a}{\eta} , \quad  y =\frac{b}{\eta},\quad  z  =\frac c{\eta}\ ,
\end{equation}
with $\eta = ab+bc +ca$, and conversely,
\begin{equation} \label{eq2.2}
a = \frac xr, \quad b = \frac yr, \quad c = \frac zr \ ,
\end{equation}
where $r = xy+ yz+ zx = \eta^{-1}$.

\begin{figure}[h]
\textrm{\centering
\scalebox{0.18}[0.18]{\includegraphics{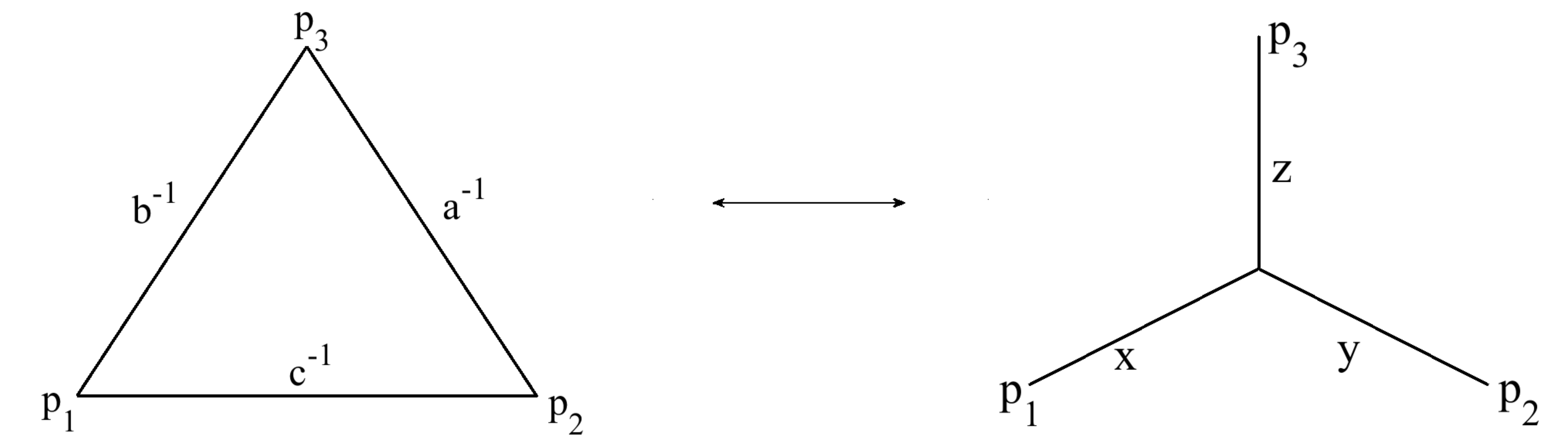}}
}
\caption{$\Delta-Y$-transform}
\label{fig1}
\end{figure}

\bigskip

Assume the conductances on the edges of the $n$-th level cells are given by $(a_n,b_n,c_n)$ for $n\geq0$.  The compatibility of the $n$-th and $(n-1)$-th resistance networks on the $Y$-side reduces to calculation the resulting resistance on the left side in Figure \ref{fig2}, which yields \eqref {eq1.4}.

\begin{figure}[h]
\textrm{\centering
\scalebox{0.18}[0.18]{\includegraphics{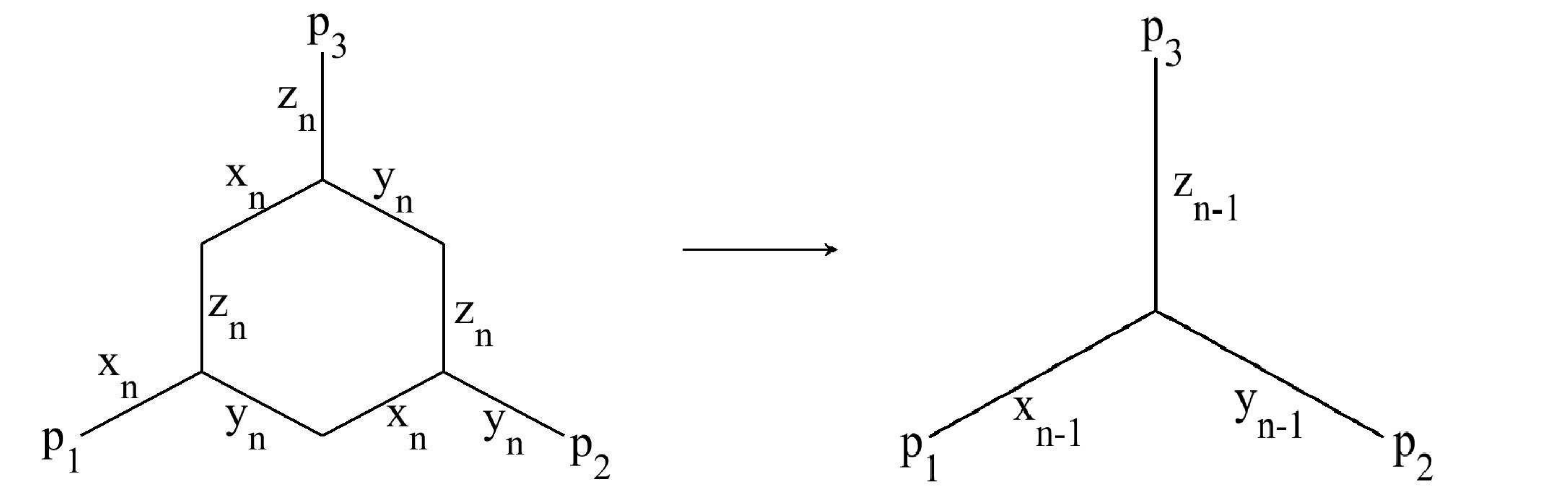}}
}
\caption{Consistence of the $n$-th and $(n-1)$-th resistance networks }\
\label{fig2}
\end{figure}

\bigskip

Our first lemma is to characterize all compatible resistance sequences $\{(x_n,y_n,z_n)\}_{n\geq 0}$.

\bigskip

\begin{lemma}\label{th2.1}
In order for
(\ref{eq1.4})  to have positive solutions $(x_n, y_n, z_n)$  for all $n\geq1$, it is necessary and sufficient that $x_0\geq y_0 = z_0>0$ (or its symmetric alternatives).
\end{lemma}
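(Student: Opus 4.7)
The plan is to split the argument into sufficiency (a one-dimensional reduction) and necessity (a monotonicity argument on the projective dynamics of \eqref{eq1.4}).

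\emph{Sufficiency.} Assume $x_0 \geq y_0 = z_0 > 0$ and impose the symmetric ansatz $y_n = z_n$ for all $n \geq 0$, which is consistent with \eqref{eq1.4} since $\phi(y;y,x) = \phi(y;x,y)$. Setting $r_n = y_n/x_n$, the system collapses to the single 1D recursion $r_{n-1} = g(r_n)$ with $g(r) = 2r(3r+2)/(r^2 + 6r + 3)$, together with $x_{n-1} = x_n(r_n^2 + 6r_n + 3)/(2(1+2r_n))$ for the absolute scale. The identity
\[
g(r) - r \;=\; \frac{r(1-r)(1+r)}{r^2 + 6r + 3}
\]
shows $g$ is strictly increasing on $[0,\infty)$, has exactly two non-negative fixed points ($0$ and $1$), and restricts to an increasing bijection of $[0,1]$ onto itself. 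Consequently, starting from $r_0 \in (0, 1]$, the preimages $r_n = g^{-n}(r_0)$ stay in $(0, r_0]$, and the associated $x_n$ remain positive, giving the required positive sequence.

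\emph{Necessity: key identity.} A direct computation from \eqref{eq1.4} yields
\[
y_{n-1} - z_{n-1} \;=\; (y_n - z_n)\cdot \frac{3s_n - x_n}{2s_n}, \qquad s_n := x_n + y_n + z_n,
\]
plus the two cyclic analogues for $x - y$ and $x - z$. Passing to projective coordinates $\tilde u := u/s_n$ on the simplex $\Delta = \{\tilde x + \tilde y + \tilde z = 1\}$, this becomes
\[
\tilde Y - \tilde Z \;=\; (\tilde y - \tilde z)\cdot \frac{3 - \tilde x}{3 + \tilde E}, \qquad \tilde E := \tilde x\tilde y + \tilde y\tilde z + \tilde z\tilde x,
\]
where capitalised tildes denote the image under the projective map $\tilde\Phi$ induced by \eqref{eq1.4}. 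Since $\tilde x > 0$, the multiplier $(3-\tilde x)/(3+\tilde E)$ lies strictly in $(0, 1)$. Hence, going forward in $n$, each of the signed projective differences $\tilde x_n - \tilde y_n$, $\tilde y_n - \tilde z_n$, $\tilde x_n - \tilde z_n$ preserves its sign while strictly increasing in absolute value.

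\emph{Necessity: main argument.} Assume a positive infinite orbit exists. The three signed differences are then monotone and bounded by $1$, so they converge; together with $\tilde x_n + \tilde y_n + \tilde z_n = 1$ this forces $(\tilde x_n, \tilde y_n, \tilde z_n) \to p^* \in \overline\Delta$. By continuity of $\tilde\Phi$, $\tilde\Phi(p^*) = p^*$, and a direct inspection of \eqref{eq1.4} shows the only fixed points of $\tilde\Phi$ in $\overline\Delta$ are the barycenter $(\tfrac13,\tfrac13,\tfrac13)$ and the three vertices. In every case at least one of the three projective differences vanishes at $p^*$, so by the monotonicity above it must vanish for all $n$. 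Thus at least two of $x_0, y_0, z_0$ coincide; after relabeling, assume $y_0 = z_0$. The slice $y = z$ is invariant under the recursion, so the orbit is governed by the 1D map $g$. Since $g((1, \infty)) = (1, 6)$ and $g(r) < r$ on $(1, \infty)$, if $r_0 = y_0/x_0 > 1$ then $r_n = g^{-n}(r_0)$ is strictly increasing and unbounded, and therefore exits $(1, 6)$ in finitely many steps; at that step no positive preimage exists, contradicting positivity. Hence $r_0 \leq 1$, i.e., $x_0 \geq y_0 = z_0$.

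The main technical point I expect will require the most care is the convergence step: one must use that the \emph{signed} differences (and not merely their absolute values) converge, which relies on the positivity of the multiplier $(3 - \tilde x)/(3 + \tilde E)$. Once this is in place, the classification of fixed points of $\tilde\Phi$ in $\overline\Delta$ and the 1D analysis on each symmetric slice are routine.
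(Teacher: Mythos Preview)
Your argument is correct and takes a genuinely different route from the paper's.

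For sufficiency, the paper simply solves the quadratic in \eqref{eq2.3} and checks the sign of $x_1-y_1$ by hand; your reduction to the scalar recursion $r_{n-1}=g(r_n)$ with $g(r)=2r(3r+2)/(r^2+6r+3)$ and the factorisation $g(r)-r=r(1-r)(1+r)/(r^2+6r+3)$ is cleaner and immediately explains why $[0,1]$ is invariant under $g^{-1}$.

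For necessity, the paper argues by brute-force growth estimates: assuming $x_0\geq y_0>z_0$, it shows $\frac{2x_n}{y_n+z_n}\to\infty$ and $\frac{y_n}{z_n}\to\infty$ via inequalities such as \eqref{eq2.4}, then linearises the last two equations of \eqref{eq1.4} to force $z_n<0$ eventually. Your approach is more conceptual: the exact identity $\tilde Y-\tilde Z=(\tilde y-\tilde z)\,\dfrac{3-\tilde x}{3+\tilde E}$ (and its cyclic analogues) shows the signed projective differences are monotone along any positive backward orbit, hence convergent; continuity of $\tilde\Phi$ on $\overline\Delta$ then pins the limit to a fixed point, and a short computation shows these are only the barycentre and the vertices, at each of which some difference vanishes---forcing it to vanish identically. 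This replaces the paper's ad~hoc inequalities by a single multiplier computation and a fixed-point classification, at the cost of that small extra classification step. A minor point: in your final paragraph, ``unbounded'' is not quite the right word---what you actually use is that $(r_n)$, being increasing in $(1,6)$, would converge to a fixed point of $g$ in $(1,6]$ if it never exited, and there is none; phrasing it that way closes the loop cleanly.
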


\bigskip

\begin{proof}
{\it Sufficiency}. Without loss of generality, assume that $x_0\geq y_0=z_0>0$. Then using this to solve the equations \eqref{eq1.4}, we have
\begin{equation}\label{eq2.3}
\begin{cases}
x_1=\frac1{15}\left(14x_0+3y_0-2\sqrt{4x_0^2+6x_0y_0+6y_0^2}\right),\\
y_1(=z_1)=\frac15\left(-2x_0+y_0+\sqrt{4x_0^2+6x_0y_0+6y_0^2}\right),
\end{cases}
\end{equation}
is a pair of positive solution of (\ref{eq1.4}). Also by  $x_0\geq y_0$, we have
\begin{equation*}
x_1-y_1=\frac1{15}\left(20x_0-5\sqrt{4x_0^2+6x_0y_0+6y_0^2}\right)\geq0.
\end{equation*}
 Hence, $x_1 \geq y_1 =z_1 $. We can repeat this process inductively, and obtain the sequence $\{(x_n,y_n,z_n)\}_{n\geq 0}$ as positive solution of \eqref{eq1.4}.

\medskip

{\it Necessity}. Without loss of generality, let $x_0\geq y_0\geq z_0>0$, we will show that $y_0 = z_0$.  Assume otherwise, $y_0 > z_0$. Let $(x_1,y_1,z_1)$ be positive solution
of (\ref{eq1.4}) for $n=1$,   we first prove the following  claims in regard to $(x_1,y_1,z_1)$:

\medskip

 \ (i) \ $x_1\geq y_1>z_1$:  \ \ For if  $x_1<y_1$, then clearly, $x_1+\phi (x_1; y_1, z_1) < y_1+\phi (y_1; z_1, x_1)$, which is $x_0<y_0$, a contradiction. Hence $x_1\geq y_1$;  by the same argument,  we have  $y_1>z_1$ from $y_0>z_0$.

\medskip

 (ii)\ \ $\dfrac{y_1}{z_1}>\dfrac{y_0}{z_0}$: \ \
Indeed, if this were not true, letting $\frac{y_0}{z_0}=\mu_0>1$, we have
\begin{equation*}
\frac{y_1}{z_1}\leq\mu_0=\frac{y_0}{z_0}=\frac{y_1+\phi(y_1; z_1, x_1)}{z_1+
\phi (z_1; x_1, y_1)}\ .
\end{equation*}
 Therefore $\frac{y_1}{z_1}\leq\frac{y_1+x_1}{z_1+x_1}$, that is $y_1\leq z_1$, which contradicts the fact that $y_1> z_1$ in (i).

\medskip

(iii)\  Let  $\lambda_0=\frac{2x_0}{y_0+z_0}>1$, and let $\rho=\frac15\left(6-\lambda_0^{-1}\right) (>1)$, we claim that
\begin{equation} \label{eq2.4}
\frac{2x_1}{y_1+z_1}\geq\lambda_0\rho.
\end{equation}
If otherwise, then
\begin{equation}\label{eq2.5}
\frac{2x_1}{y_1+z_1}<\lambda_0\rho.
\end{equation}
By (\ref{eq1.4}), we have
\begin{equation}\label{eq2.6}
\lambda_0=\frac{2x_0}{y_0+z_0}=\frac{2x_1+2 \phi(x_1; y_1, z_1)}
{(y_1+z_1)+\frac{(y_1+z_1)(2x_1+y_1+z_1)}{2(x_1+y_1+z_1)}}.
\end{equation}
Observe that
$$
\frac{2(x_1+y_1+z_1)}{(2x_1+y_1+z_1)}<2.
$$
This, together with (\ref{eq2.5}), (\ref{eq2.6}) and a simple calculation, yields
\begin{equation}\label{eq2.7}
\frac{2(x_1+y_1)(x_1+z_1)}{(y_1+z_1)(2x_1+y_1+z_1)} > \lambda_0\cdot(3-2\rho).
\end{equation}
On the other hand, by using $\rho=\frac15\left(6-\lambda_0^{-1}\right)$, we have
\begin{align*}
\frac{2(x_1+y_1)(x_1+z_1)}{(y_1+z_1)(2x_1+y_1+z_1)}
&\leq\frac12\cdot\frac{(2x_1+y_1+z_1)^2}{(y_1+z_1)(2x_1+y_1+z_1)}=\frac12\left(1+\frac{2x_1}{y_1+z_1}\right)\\
&<\frac12(1+\lambda_0\cdot\rho)=\lambda_0\cdot(3-2\rho).
\end{align*}
 This contradicts (\ref{eq2.7}), and (iii) follows.

 \medskip

By (i), we can carry out the estimate in (iii) inductively and obtain
\begin{equation} \label{eq2.8}
\frac{2x_n}{y_n+z_n}\geq\lambda_0\cdot\rho^{n}\rightarrow\infty, \quad \hbox { as}  \ n \to \infty.
\end{equation}
Also using (ii),  we have $\frac{y_n}{z_n}\geq\frac{y_0}{z_0}=\mu_0>1$ for any $n\geq1$,  and a similar argument as in (iii) yields
\begin{equation}\label{eq2.9}
\frac{y_n}{z_n}\rightarrow\infty, \quad \hbox {as} \  n\rightarrow\infty
\end{equation}
(for example, one can take $\rho=(5-\mu_0^{-1})/4 > 1$, and show that there is $n_0$ such that for all $n\geq n_0$,
$
\frac{y_n}{z_n}> \rho\frac{y_{n-1}}{z_{n-1}}
$
holds).

\medskip
Now consider
\begin{equation*}
\begin{cases}
y_{n-1}=y_n+\phi (y_n; z_n, x_n),\\
z_{n-1}=z_n+ \phi (z_n; x_n, y_n),
\end{cases}
\end{equation*}
for $n$ and $\frac{x_n}{y_n+z_n}$  sufficiently large.
By $\frac{x_n}{y_n+z_n}\rightarrow\infty$, it reduces to
\begin{equation*}
\begin{cases}
y_{n-1}=\big(\frac 32 y_n+ \frac 12 z_n\big) (1+ o(1)) ,\\
z_{n-1}=\big (\frac 32 z_n+ \frac 12 y_n\big ) (1+o(1)),
\end{cases}
\end{equation*}
where $o(1)$ is an error term that tends to $0$ as $n\to \infty$. Therefore we obtain
\begin{equation*}
\begin{cases}
y_{n}\asymp \frac34y_{n-1}-\frac14z_{n-1},\\
z_{n}\asymp \frac34z_{n-1}-\frac14y_{n-1}.
\end{cases}
\end{equation*}
This together  with (\ref{eq2.9}) contradicts the assumption that $\{z_n\}_{n\geq0}$ are positive. Therefore we must have $y_0 = z_0$, and completes the proof.
\end{proof}

\bigskip

\begin{lemma}\label{th2.2}
Let $x_0\geq y_0 = z_0 >0$ be fixed, then for $n\geq1$,
\begin{equation*}
\begin{cases}
x_n=\frac1{15}\left(14x_{n-1}+3y_{n-1}-2\sqrt{4x_{n-1}^2+6x_{n-1}y_{n-1}+6y_{n-1}^2}\right),\\
y_n=\frac15\left(-2x_{n-1}+y_{n-1}+\sqrt{4x_{n-1}^2+6x_{n-1}y_{n-1}+6y_{n-1}^2}\right).
\end{cases}
\end{equation*}
Also for  $x_0 = y_0=z_0$, then $x_n= y_n=z_n= \left (\dfrac 35\right)^n x_0$,  and for  $x_0 > y_0=z_0$,
$$
x_n\asymp\left(\dfrac23\right)^n, \quad y_n= z_n \asymp\left(\dfrac12\right)^n.
$$
\end{lemma}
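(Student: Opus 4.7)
First I would use Lemma~\ref{th2.1}: under the standing hypothesis $x_0\geq y_0=z_0>0$, positive solutions of \eqref{eq1.4} necessarily satisfy $z_n=y_n$ for all $n$, so \eqref{eq1.4} collapses to the two relations
\begin{align*}
x_{n-1} &= x_n + \frac{(x_n+y_n)^2}{2(x_n+2y_n)}, \\
y_{n-1} &= \frac{y_n(2x_n+3y_n)}{x_n+2y_n}.
\end{align*}
Clearing denominators in the first gives the quadratic $3x_n^2+(6y_n-2x_{n-1})x_n+(y_n^2-4x_{n-1}y_n)=0$ in $x_n$; its discriminant is $4(x_{n-1}^2+6x_{n-1}y_n+6y_n^2)$, and positivity of $x_n$ forces the $+$ root. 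Substituting into the second relation produces a quadratic in $y_n$ whose admissible root is the claimed closed form, and back-substitution returns the formula for $x_n$. Equivalently, and more cleanly, one can verify the proposed formulas satisfy the system directly: setting $S=\sqrt{4x_{n-1}^2+6x_{n-1}y_{n-1}+6y_{n-1}^2}$, elementary algebra yields
\begin{align*}
x_n+2y_n &= \tfrac{1}{15}(2x_{n-1}+9y_{n-1}+4S), \\
2x_n+3y_n &= \tfrac{1}{3}(2x_{n-1}+3y_{n-1}+S),
\end{align*}
after which both identities $y_{n-1}(x_n+2y_n)=y_n(2x_n+3y_n)$ and $2x_{n-1}(x_n+2y_n)=3x_n^2+6x_ny_n+y_n^2$ reduce, upon using $S^2=4x_{n-1}^2+6x_{n-1}y_{n-1}+6y_{n-1}^2$, to trivial polynomial identities in $x_{n-1},y_{n-1},S$.

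For the asymptotic half, the case $x_0=y_0=z_0$ is immediate: symmetry of \eqref{eq1.4} is preserved, so $x_n=y_n=z_n$ and the system collapses to $x_{n-1}=(5/3)x_n$, giving $x_n=(3/5)^n x_0$; this also drops out of the closed form by setting $y_{n-1}=x_{n-1}$ (so $S=4x_{n-1}$). For $x_0>y_0=z_0$, set $\lambda_n=x_n/y_n$. Step (iii) in the proof of Lemma~\ref{th2.1} uses only \eqref{eq1.4} together with positivity (not $y_n\neq z_n$), so it still delivers $\lambda_n\geq \lambda_0\rho^n$ with $\rho=(6-\lambda_0^{-1})/5>1$; hence $\tau_n:=y_n/x_n\to 0$ geometrically. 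Rationalising the formula for $y_n$ gives
\begin{equation*}
\frac{y_n}{y_{n-1}}=\frac{6x_{n-1}+7y_{n-1}+2S}{5(2x_{n-1}+y_{n-1}+S)}, \qquad \frac{x_n}{x_{n-1}}=\frac{14+3\tau_{n-1}-2\sqrt{4+6\tau_{n-1}+6\tau_{n-1}^2}}{15}.
\end{equation*}
Using $2x_{n-1}\leq S\leq 4x_{n-1}$ (which follows from $y_{n-1}\leq x_{n-1}$), both ratios lie in fixed compact subintervals of $(0,\infty)$ for all $n$; Taylor expansion at $\tau_{n-1}=0$ further shows $x_n/x_{n-1}\to 2/3$ and $y_n/y_{n-1}\to 1/2$. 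Telescoping gives $x_n\asymp (2/3)^n$ and $y_n=z_n\asymp (1/2)^n$.

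The main obstacle is the first step: extracting, or verifying, the explicit closed forms is algebraically bulky, and one must pick the correct quadratic roots by appeal to positivity. Once that is done, the dichotomy $x_0=y_0$ versus $x_0>y_0$ is straightforward, the asymptotic rates $2/3$ and $1/2$ being read off the leading behaviour of the closed forms as $\tau_{n-1}\to 0$, with the exponential decay $\tau_n\to 0$ borrowed directly from the proof of Lemma~\ref{th2.1}.
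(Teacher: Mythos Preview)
Your proof is correct and follows essentially the same route as the paper's: reduce to the two-variable system via $z_n=y_n$, obtain/verify the closed formulas (the paper just refers back to \eqref{eq2.3} for this), invoke the growth estimate from step~(iii) of Lemma~\ref{th2.1} to get $y_n/x_n\to 0$ geometrically, read off the ratio limits $x_n/x_{n-1}\to 2/3$ and $y_n/y_{n-1}\to 1/2$ from the explicit formulas, and telescope. Your direct verification of the closed forms and the rationalised expression for $y_n/y_{n-1}$ supply detail the paper omits, and your remark that step~(iii) uses only \eqref{eq1.4} and positivity (not $y_n\neq z_n$) makes explicit a point the paper passes over when it cites \eqref{eq2.8}.
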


\medskip

\begin{proof}  Similar to \eqref {eq2.3}, we can solve equations \eqref{eq1.4} for $x_n$ and $y_n$ as the above. It follow that if $x_0=y_0=z_0$, then $x_n=y_n=z_n = \left (\dfrac 35\right)^n x_0$.  By Lemma \ref{th2.1}, we see that $x_0>y_0  = z_0 >0$ implies $x_n> y_n = z_n$  inductively.
Also from \eqref{eq2.8}, we see that for all $n\geq0$, $\frac{y_n}{x_n}\leq C\delta^n$ for some constant $C>0$ and $0<\delta<1$ (depending only on $\frac{y_0}{x_0}$). Combining this with
\begin{equation*}
\frac{x_n}{x_{n-1}}=\frac1{15}\left(14+3\frac{y_{n-1}}{x_{n-1}}-2\cdot\sqrt{4+
6\frac{y_{n-1}}{x_{n-1}}+6\left(\frac{y_{n-1}}{x_{n-1}}\right)^2}\right),
\end{equation*}
we can find  $C_1>0$ such that for $n$ large,
\begin{equation*}
\frac23-C_1\delta^n\leq\frac{x_n}{x_{n-1}}\leq\frac23+C_1\delta^n.
\end{equation*}
Therefore we have
$
x_n\asymp \left(\frac23\right)^n,
$ and
similarly $
y_n\asymp \left(\frac12\right)^n.
$
\end{proof}

\bigskip

\noindent {\bf Proof of Proposition \ref{th1.1}}. It  follows readily from Lemmas \ref{th2.1} and \ref{th2.2}. \hfill $\Box$

\bigskip

It follows from the compatibility of $\{(x_n, y_n, z_n)\}_{n\geq 0}$ and the $\Delta$-$Y$ transform  that $\{{\mathcal E}_n^{(a_0, b_0)}\}_{n\geq 0}$ are compatible. Hence for  a function $u \in \ell (V_n)$, we can construct inductively  harmonic extensions $ u_m$ on $V_m, \ m>n$ and ${\mathcal E}^{(a_0, b_0)}_m( u_m) = {\mathcal E}^{(a_0, b_0)}_n( u)$; also for $u\in \ell (V_*)$, ${\mathcal E}_n(u|_{V_n})$ is an increasing sequence.  We  define ${\mathcal E} (u) := {\mathcal E}^{(a_0, b_0)} (u) = \lim_{n \to \infty} {\mathcal E}_n^{(a_0, b_0)} (u|_{V_n}) $ for $u \in \ell (V_*)$.  Recall that the {\it (effective) resistance metric} $R := R^{(a_0,b_0)}$ on $V_*\times V_*$ is defined by   $R^{(a_0, b_0)}(x,x)=0$ for any $x\in V_*$ , and for any two distinct points $x,y\in V_*$,
\begin{equation*}
R(x,y)^{-1}:=\inf\{\mathcal{E}(u):\ u\in\ell (V_*), \  u(x)=1, u(y)=0\}.
\end{equation*}
\bigskip
Note that  for $a_0 =b_0=c_0$,  then $R(x, y)  \asymp |x-y|^{\gamma}$ where  $\gamma=\frac{\log(5/3)}{\log2}$ \cite {K, S}.
\bigskip

\medskip

\begin{proposition} \label{th2.3}  For  $a_0 > b_0 =c_0$, the completion of the $(V_*, R^{(a_0, b_0)})$   is $K$,  and
\begin{equation}\label{eq2.10}
C^{-1}|x-y|\leq R^{(a_0,b_0)}(x,y)\leq C|x-y|^{\gamma'},  \qquad  x, y \in K
\end{equation}
where $\gamma'=\frac{\log3}{\log2}-1$ and $C>0$ is a constant  depends on $a_0$ and $b_0$.

Furthermore $R^{(a_0, b_0)} $ is a bounded metric with
\begin{equation}\label{eq2.11}
\sup\left\{R^{(a_0,b_0)}(x,y): x,y\in K\right\}\leq C'b_0^{-1}.
\end{equation}
where $C' >0$ is independent of $a_0$ and $b_0$.
\end{proposition}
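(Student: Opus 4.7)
For $x,y\in V_*$, pick $n$ with $2^{-n-1}<|x-y|\leq 2^{-n}$; then $x$ and $y$ lie in the closure of the union of at most two adjacent level-$n$ cells of $K$. By compatibility and the $\Delta$-$Y$ transform, the resistance between any two vertices of a level-$j$ cell is bounded by $x_j+y_j$. Chaining through nested sub-cells and the shared vertex (when $x,y$ belong to distinct level-$n$ cells), one obtains $R(x,y)\lesssim\sum_{j\geq n}(x_j+y_j)$; by Lemma \ref{th2.2} this geometric sum is $\lesssim(2/3)^n=2^{-n\gamma'}\asymp|x-y|^{\gamma'}$, and the estimate extends from $V_*$ to $K$ by continuity.

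\textbf{Boundedness.} The scaling $R^{(\lambda a_0,\lambda b_0)}=\lambda^{-1}R^{(a_0,b_0)}$ reduces matters to the normalized case $b_0=1$, $a_0\geq 1$. In this case $x_0\leq 1/2$ and $y_0\leq 1/3$, and the elementary inequalities $\phi(x;y,y)=(x+y)^2/(2(x+2y))\geq x/6$ and $\phi(y;y,x)=y(y+x)/(2y+x)\geq y/3$ (valid for $0<y\leq x$), applied to \eqref{eq1.4}, yield the universal geometric decays $x_n\leq(6/7)^n x_0$ and $y_n\leq(3/4)^n y_0$. Therefore $\sum_n(x_n+y_n)$ is bounded by a universal constant $K_0$. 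Specializing the chain argument of the upper bound to $n=0$ (the single level-$0$ cell is $K$ itself) gives a universal bound on $R(x,y)$ in the normalized setting, and unscaling yields \eqref{eq2.11}.

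\textbf{Lower bound and completion.} For $x\neq y$ in $V_*$, take $m$ minimal so that some level-$m$ cell $C_x$ contains $x$ but not $y$; then $|x-y|\leq 2\cdot 2^{-m}$. The edge cut $S\subset\Gamma_m$ separating $V_m\cap C_x$ from its complement has $|S|\leq 6$, since $C_x$ shares at most three vertices with other level-$m$ cells, each such shared vertex contributing at most two edges to $S$. By Lemma \ref{th2.1}, every conductance in $\Gamma_m$ is at most $a_m\asymp 2^m$, so every edge resistance is at least $a_m^{-1}$. For any unit flow $f$ from $x$ to $y$ the net flow across $S$ equals $1$, whence Cauchy-Schwarz gives $\sum_{e\in S}f_e^2\geq 1/|S|$, and consequently
\begin{equation*}
R(x,y)=\min_f\sum_e r_e f_e^2\geq\frac{a_m^{-1}}{|S|}\geq\frac{a_m^{-1}}{6}\asymp 2^{-m}\gtrsim|x-y|.
\end{equation*}
Together with the upper bound, $R$ and $|\cdot|$ induce the same topology on $V_*$, so the $R$-completion of $V_*$ is canonically identified with $K$. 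The main subtlety lies in the lower bound: using the \emph{uniform} upper bound $c_e\leq a_m$ on all conductances in $\Gamma_m$ (rather than a cell-by-cell bound that would involve only the smaller $b_m$ on $b$- and $c$-edges) is what delivers the linear estimate $R\gtrsim 2^{-m}$ instead of a weaker Hölder-type bound $\gtrsim(2/3)^m$.
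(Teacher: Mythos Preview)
Your upper bound and the boundedness argument are both correct. For the diameter bound \eqref{eq2.11} your route is a clean alternative to the paper's: the paper chains through $\sum_n b_n^{-1}$ and establishes $b_{n-1}/b_n\leq 13/15$ by a somewhat laborious direct estimate, whereas you normalize $b_0=1$ via the homogeneity $R^{(\lambda a_0,\lambda b_0)}=\lambda^{-1}R^{(a_0,b_0)}$ and then read off universal geometric decay of the $Y$-resistances $x_n,y_n$ from the elementary lower bounds $\phi(x;y,y)\geq x/6$ and $\phi(y;y,x)\geq y/3$ in \eqref{eq1.4}. Both give a constant independent of $a_0$ once $b_0$ is fixed.

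The lower bound, however, has a genuine gap. The Thomson-principle flow $f$ from $x$ to $y$ lives in $\Gamma_M$ for the least $M$ with $x,y\in V_M$, not in $\Gamma_m$; your cut $S\subset\Gamma_m$ is not a set of edges of $\Gamma_M$, so ``net flow across $S$'' has no direct meaning. If one replaces $S$ by the actual edge boundary of $C_x$ in $\Gamma_M$ (still at most six edges, two through each corner of $C_x$), those edges carry conductance at most $a_M$, and the cut bound yields only $R(x,y)\geq(6a_M)^{-1}\asymp 2^{-M}$, which can be arbitrarily smaller than $2^{-m}$ when $M\gg m$.

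The paper's remedy is to pass to the dual (Dirichlet) side, where compatibility works in the right direction. Take the $k$-piecewise harmonic function $\psi$ with $\psi|_{V_k}$ equal to the indicator of $V_k\cap C_x^{(k)}$; harmonic extension preserves energy, so $\mathcal{E}(\psi)=\mathcal{E}_k(\psi|_{V_k})\leq 6a_k$, and then $R(x,y)\geq(\psi(x)-\psi(y))^2/(6a_k)$. For this to give $\psi(y)=0$ one needs the level-$k$ cell containing $y$ to share \emph{no} vertex with $C_x^{(k)}$. Your index $m$ never achieves this: by minimality, $x$ and $y$ lie in a common level-$(m-1)$ cell, whose three level-$m$ subcells are pairwise adjacent, so $C_x^{(m)}$ and $C_y^{(m)}$ always meet at a vertex $v$, and the harmonic extension of $\psi$ need not vanish at $y$. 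The paper instead takes $n$ maximal with $x,y$ in a single $n$-cell or in two adjacent $n$-cells; then at level $k=n+1$ the cells through $x$ and $y$ are non-adjacent, $\psi(y)=0$, and one obtains $R(x,y)\gtrsim a_{n+1}^{-1}\asymp 2^{-n}\asymp|x-y|$.
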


\medskip

\begin{proof}  Fix $x_0>y_0=z_0>0$,   then $x_n> y_n =z_n$.  As in \eqref{eq2.2},  $r_n = x_ny_n + y_nz_n + z_nx_n =2x_n y_n + y_n^2$.  By \eqref{eq2.2} and Lemma \ref{th2.2},
$$
a_n =  \dfrac {x_n}{r_n} = \dfrac {x_n} {2x_n y_n + y_n^2} \asymp 2^n,\qquad
b_n = c_n =  \dfrac {y_n}{r_n}=  \dfrac {y_n} {2x_n y_n + y_n^2} \asymp \Big (\dfrac 32\Big)^n.
$$

Let us write $R(x, y) = R^{(a_0, b_0)}(x, y)$.
To estimate $R(x, y)$  on $V_*$,  we first consider  $x\sim_n y$,  and  let  $\psi^{(n)}_x (z) = \delta_x (z) , \  x, z \in V_n$ where $\delta_{x} $ is the Dirac measure on $V_n$. It follows that
$$
R^{-1}(x, y)  \leq {\mathcal E} (\psi^{(n)}_x) \leq  C_1 2^n = C_1|x-y|^{-1}.
$$
On the the other hand, we have
$$
R^{-1}(x, y) \geq \min\{a_n, b_n\} \geq C_2 \Big (\frac 32 \Big )^n = C_2  |x-y|^{-\log (3/2)/ \log 2}.
$$
 For the estimate of $R(x,y)$ with any distinct $x,y\in V_*$. Let $n$ be the maximal integer such that both $x,y$ belong to either an $n$-level cell or a union of two adjacent $n$-level cells. Then using a similar argument as above, we have $R(x,y)\leq C\Big (\dfrac 23\Big)^n$ and $R(x,y)\geq C^{-1}2^{-n}$. This gives that $R(x,y)$ satisfies the required estimate since $|x-y|\asymp 2^{-n}$.  This completes the proof of \eqref {eq2.10} for $x, y \in V_*$,  it  follows that the completion of $(V_*, R)$ is $K$, and the same estimate holds for $x,y \in K$.

\medskip

To prove \eqref{eq2.11}, we only need to estimate $R(x,p_1)$ from above with $x\in K$ since for any two points $x,y$ in $K$, $R(x,y)\leq R(x,p_1)+R(y,p_1)$. We can find a chain of points $\{x_n\}_{n=0}^\infty$ in $V_*$ with $x_0=p_1$ and $x_n\rightarrow x$ as $n\rightarrow\infty$ such that $x_n,x_{n+1}$ are two of the boundary points of some $(n+1)$-cell.
Thus by triangle inequality, we have
\begin{equation}\label{eq2.12}
R(x,p_1)\leq \sum\limits_{n=0}^{\infty}R(x_n,x_{n+1})\leq \sum\limits_{n=0}^{\infty}b_n^{-1}.
\end{equation}
On the other hand, we see that
{\small \begin{align*}
&\frac{b_{n-1}}{b_n}=\frac{2x_n+y_n}{2x_{n-1}+y_{n-1}}\leq\max\left\{\frac{x_n}{x_{n-1}},
\frac{y_n}{y_{n-1}}\right\}\\
=&\max\left\{\frac1{15}\left(14+3\frac{y_{n-1}}{x_{n-1}}-2\sqrt{4+
6\frac{y_{n-1}}{x_{n-1}}+6\Big(\frac{y_{n-1}}{x_{n-1}}\Big)^2}\right),\ \frac1{5}
\left(-2\frac{x_{n-1}}{y_{n-1}}+1+
\sqrt{4\Big(\frac{x_{n-1}}{y_{n-1}}\Big)^2+
6\frac{x_{n-1}}{y_{n-1}}+6}\right)\right\}\\
\leq&\max\left\{\frac1{15}(14+3-2\sqrt{4}),\ \frac15\left(-2\frac{x_{n-1}}{y_{n-1}}
+1+2\frac{x_{n-1}}{y_{n-1}}+3\right)\right\}=\frac{13}{15}<1.
\end{align*}}
Therefore the series in (\ref{eq2.12}) converges and is bounded above by $Cb_0^{-1}$.

\vspace {0.1cm}
\end{proof}

\bigskip

  It follows that under the resistance metric, $u\in \ell (V_n)$ can be extended harmonically on $V_*$, then continuously on $K$,  we call this an {\it $n$-piecewise harmonic function} on $K$. As a special case, consider the  harmonic function  that takes value $1, 0,0$ on $p_1, p_2, p_3$. It is direct to check,  using the harmonicity of $u$ at $V_1 \setminus V_0$, that
\begin{equation} \label{eq2.13}
u(p_{12}) =u(p_{13}) = \frac {a_1 +b_1}{3a_1 + 2b_1}, \qquad u(p_{23})= \frac {b_1}{3a_1 + 2b_1}.
\end{equation}
For the special case that $a_0=b_0 =c_0$, it is the $\frac 15$-$\frac25$-law in the standard Dirichlet form on SG \cite {K, S}.

\bigskip

\noindent {\bf Proof of Theorem \ref {th1.2}.}  Fix $x_0>y_0=z_0>0$,   it follows from the proof in Proposition \ref {th2.3} that
$
a_n \asymp 2^n,\
b_n = c_n  \asymp \Big (\dfrac 32\Big)^n.
$
For $u\in C(K)$ and $n\geq0$, let
\begin{equation*}
{\mathcal E}^{(a_0,b_0)}_n(u) = \sum\limits_{\omega\in W_n}b_n\Big(u_\omega(p_1)-u_\omega(p_2)\Big)^2+b_n\Big(u_\omega(p_1)-u_\omega(p_3)\Big)^2+a_n\Big(u_\omega(p_2)-u_\omega(p_3)\Big)^2,
\end{equation*}
where $u_\omega(x)=u\circ F_\omega(x)$.
Define
\begin{align*}
{\mathcal E}^{(a_0,b_0)}(u) =\lim\limits_{n\rightarrow\infty} {\mathcal E}^{(a_0,b_0)}_n(u|_{V_n}), \quad
{\mathcal F}(={\mathcal F}^{(a_0,b_0)})=\{u\in C(K): {\mathcal E}^{(a_0,b_0)}(u)<\infty\}.
\end{align*}
In view of the compatibility of the sequence $\{(a_n, b_n, c_n)\}_n$, we have
$
{\mathcal E}^{(a_0,b_0)}_n(u) = \sum_{i=1}^3{\mathcal E}^{(a_1,b_1)}_{n-1}(u\circ F_i).
$
By taking limit, we obtain
\begin{equation*}
{\mathcal E}^{(a_0,b_0)}(u) = \sum_{i=1}^3{\mathcal E}^{(a_1,b_1)}(u\circ F_i).
\end{equation*}

 It is standard to check that $( {\mathcal E}^{(a_0,b_0)}, \mathcal F)$ is a Dirichlet form on $L^2(K, \mu)$.  It is regular by observing that  the {\em piecewise harmonic functions} are continuous functions in ${\mathcal F}$ and are dense in $C(K)$, and $C(K)\cap {\mathcal F} (= {\mathcal F})$ is trivially $({\mathcal E}^{(a_0,b_0)})^{1/2}+ ||\cdot ||_{L^2(K, \mu)}$-dense in ${\mathcal F}$.  By using the above identity  repeatedly, we obtain that for any $n\geq1$,
\begin{equation}\label{eq2.14}
{\mathcal E}^{(a_0,b_0)}(u) = \sum_{\omega\in W_n}{\mathcal E}^{(a_n,b_n)}(u\circ F_\omega),
\end{equation}
which leads to the strong locality of  $({\mathcal E}^{(a_0,b_0)},{\mathcal F})$.

\vspace {0.1cm}

Finally, we see that $({\mathcal E}^{(a_0,b_0)},{\mathcal F})$ does not satisfy the energy self-similar identity (\ref{eq1.2}). It is because  if it satisfies the identity for some $r_i$, then by our construction, all the $r_i$ in (\ref{eq1.1}) should be equal. However, by the uniqueness result of Sabot \cite{SOB}, $({\mathcal E}^{(a_0,b_0)},{\mathcal F})$ should be the standard one defined by (\ref{eq1.1}), a contradiction.
\hfill $\Box$

\bigskip
The following  dichotomic result  follows directly from Theorem \ref{th1.2}.

\medskip

\begin {corollary}  \label {th2.4}For the recursive construction of the Dirichlet form with initial data $(a_0, b_0, c_0)$, there are only two cases, either

(i) \ $a_0= b_0 =c_0$, and in this case  ${\mathcal E}$ is the standard Dirichlet form as in \eqref{eq1.1},

\noindent or

(ii) \ $a_0 > b_0 =c_0$ (or the symmetric alternates), and the Dirichlet form satisfies
\begin{align*}
&{\mathcal E}(u)\asymp\\
 &{\small \sup\limits_{n\geq0}\ \left\{ 2^n\sum\limits_{\omega\in W_n}\left (\Big(u_\omega(p_2)-u_\omega(p_3)\Big)^2 + \left(\frac34\right)^n\Big(u_\omega(p_1)-u_\omega(p_2)\Big)^2+\left(\frac34\right)^n\Big(u_\omega(p_1)-
 u_\omega(p_3)\Big)^2 \right )\right \} .}
\end{align*}
\end{corollary}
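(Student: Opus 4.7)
The plan is to read off the corollary directly from Proposition \ref{th1.1} together with the asymptotic estimates established in Theorem \ref{th1.2} (equivalently Lemma \ref{th2.2}). Proposition \ref{th1.1} already forces the initial data to satisfy $x_0 \ge y_0 = z_0 > 0$ (up to permutation), which under the $\Delta$-$Y$ relation \eqref{eq2.2} translates exactly into the dichotomy $a_0 \ge b_0 = c_0$. So only two cases can arise, and it remains to identify the resulting form in each.

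For case (i), when $a_0 = b_0 = c_0$, one computes via \eqref{eq2.1} that $x_0 = y_0 = z_0$, whence by Lemma \ref{th2.2} one has $x_n = y_n = z_n = (3/5)^n x_0$ for all $n$. Inverting through \eqref{eq2.2} then gives $a_n = b_n = c_n = (5/3)^n a_0$. Plugging this into the definition \eqref{eq1.3} of $\mathcal{E}_n^{(a_0, b_0)}$ reproduces, up to the multiplicative constant $a_0$, the expression appearing in the standard energy \eqref{eq1.1}, so $\mathcal{E}^{(a_0, b_0)}$ is (a scalar multiple of) the standard Dirichlet form.

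For case (ii), when $a_0 > b_0 = c_0$, Theorem \ref{th1.2} supplies the sharp asymptotics $a_n \asymp 2^n$ and $b_n = c_n \asymp (3/2)^n$. Factoring $a_n$ out of each $n$-cell contribution in \eqref{eq1.3} produces the ratio $b_n/a_n \asymp (3/4)^n$, so
\begin{equation*}
\mathcal{E}_n^{(a_0,b_0)}(u) \asymp 2^n \sum_{\omega \in W_n} \Bigl[(u_\omega(p_2)-u_\omega(p_3))^2 + (3/4)^n\bigl((u_\omega(p_1)-u_\omega(p_2))^2 + (u_\omega(p_1)-u_\omega(p_3))^2\bigr)\Bigr],
\end{equation*}
with constants independent of $u$ and $n$. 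Since the sequence $\mathcal{E}_n^{(a_0, b_0)}(u|_{V_n})$ is monotone nondecreasing (noted just before Proposition \ref{th2.3} as a consequence of the compatibility $\mathcal{E}_{n+1}|_{V_n} = \mathcal{E}_n$), its limit equals its supremum, giving $\mathcal{E}^{(a_0, b_0)}(u) = \sup_n \mathcal{E}_n^{(a_0,b_0)}(u|_{V_n})$, and the claimed equivalence follows.

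There is no real obstacle here, as the statement is a repackaging: the dichotomy is exactly Proposition \ref{th1.1}, and the asymptotic formula is the substitution $a_n \asymp 2^n$, $b_n \asymp (3/2)^n$ into \eqref{eq1.3} followed by taking the monotone limit. The only care-point is making sure that the $\asymp$ constants obtained after factoring out $a_n$ are indeed uniform in $n$, which is precisely the content of the $\asymp$ estimates already proved.
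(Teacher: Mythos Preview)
Your proposal is correct and follows essentially the same route as the paper, which simply states that the corollary ``follows directly from Theorem \ref{th1.2}''; you have merely spelled out the details (the $\Delta$-$Y$ correspondence giving $a_0\ge b_0=c_0$, the explicit $(3/5)^n$ scaling in case~(i), and the substitution $a_n\asymp 2^n$, $b_n\asymp(3/2)^n$ together with monotonicity in case~(ii)) that the paper leaves implicit.
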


\bigskip

 It is well-known that a regular strongly local Dirichlet form  associates with a continuous diffusion process \cite {FOT}.  In fact, this probability counter part of ${\mathcal E}^{(a_0, b_0)}$ had been studied by Hattori {\it et al} \cite {HHW} as an asymptotically one-dimensional diffusion processes on the SG.  To conclude this section, we give a brief discussion of their study in comparison with our consideration.

 \medskip

 For a random walk  $\{Z^{(n, \alpha)}_k\}_k$  on $V_n$ with $\alpha = (\alpha_1, \alpha_2, \alpha_3)$, the probability that the walk goes to the four neighbors (except at $V_0$) in the three directions (counting the opposite direction as one), define the {\it $(n-1)$-decimated walk} $\{Z'_\ell\}_\ell $ on $V_{n-1}$ that records the visit of $Z^{(n, \alpha)}_k $ in $V_{n-1}$ in the $\ell$-th time (with a state distinct from $Z'_{\ell-1}$). Then it is direct to show that for $\{Z^{(n, \alpha)}_k\}_k $ with starting point on $V_{n-1}$, \  $\{Z'_\ell\}_\ell$ obeys the same law as  $\{Z^{(n-1, T\alpha)}_k\}_k$
where
 $$
 T\alpha = C\Big ( \alpha_1 + \frac {\alpha_2 \alpha_3}3, \ \alpha_2 + \frac {\alpha_3 \alpha_1}3, \ \alpha_3 + \frac {\alpha_1 \alpha_2}3 \Big),
$$
 and $C$ is a normalized constant  \cite {HHW}. This sets up the compatible condition by letting  $\alpha_{n-1} =T\alpha_n$ (renormalization group), the exact analog of \eqref{eq1.4}.  Then they define the random walk using
$$
\alpha_n = ( \alpha_{n,1},\  \alpha_{n,2},\  \alpha_{n,3} ) := C'( 1, w_n, w_n),
$$
where $0<w_0<1$, $C'$ is a normalized constant, and $w_n,\  n\geq 1 $,  are defined inductively by
$$
w_n =  \Big (-2 + 3w_{n-1} + \sqrt {4+6w_{n-1} + 6 w_{n-1}^2}\ \Big )\ \big / \ \big (6-w_{n-1}\big).
$$
For $x_n, y_n $ in Lemma \ref{th2.2},  it can be shown that $y_n/x_n$ has the same expression as the above $w_n$.

\medskip

Note that in this case  $\lim_{n\to \infty} \alpha_n = (1, 0, 0)$. Let $X_t (n) = Z_{[6^nt]}(n, \alpha_n)$, then with some more work, they proved that $ \{X_t (n)\}_{n=0}^\infty$ converges weakly to a continuous, strongly Markov processes $X_t$ on $K$, and the moves are asymptotically one-dimensional, dominated in the direction parallel to $\overline {p_2p_3}$,  and  of order $O(3/4)^n$ in the other two directions.  This is in line with the expression of ${\mathcal E}^{(a_0, b_0)}$ in Corollary \ref{th2.4}(ii), as the energy  has two scaling exponents and is concentrated in the $\overline {p_2p_3}$ direction.

\bigskip
\bigskip

\section{\bf Spectral asymptotics}
\setcounter{equation}{0}\setcounter{theorem}{0}

\bigskip

Let $\Delta^{(a_0,b_0)}$ be the {\em Laplacian}, the infinitesimal generator of the Dirichlet form $\left(\mathcal{E}^{(a_0,b_0)},\mathcal{F}\right)$ on $L^2(K,\mu)$. In both cases  $a_0=b_0$ and $a_0>b_0$, $\mathcal{F}$ is compactly imbedded in $C(K)$ and hence in $L^2(K,\mu)$. Therefore the eigenvalues of $-\Delta^{(a_0,b_0)}$ with the Dirichlet or Neumann boundary condition are nonnegative, countable and  have no limit point.
Denote by $\rho^{(a_0,b_0)}(t)$ the {\em eigenvalue counting function} of $-\Delta^{(a_0,b_0)}$ with the {\em Dirichlet boundary condition}  as in \eqref {eq1.6},
and by $\rho^{(a_0,b_0)}_N(t)$ the eigenvalue counting function of $-\Delta^{(a_0,b_0)}$ with the {\em Neumann boundary condition}, where in both cases, each eigenvalue is counted according to its multiplicity. We are interested in the asymptotic growth rate of $\rho^{(a_0,b_0)}(t)$ and $\rho^{(a_0,b_0)}_N(t)$ as $t\rightarrow\infty$.  It is known that  (see \cite[Lemma 2.3(2)]{KL93})
\begin{equation} \label{eq3.1}
\rho^{(a_0,b_0)}(t)\leq\rho^{(a_0, b_0)}_N(t)\leq \rho^{(a_0,b_0)}(t)+3,
\end{equation}
where $3$ is the dimension of the space of all the harmonic functions on $K$. Hence $\rho^{(a_0,b_0)}(t)$ and $\rho^{(a_0,b_0)}_N(t)$ have the same asymptotic behavior.

\bigskip

In the case  $a_0=b_0 =c_0$ for the standard Dirichlet form, it is known that (e.g. \cite{FS}, \cite{KL93})
\begin{equation*}
\rho^{(a_0,b_0)}(t) \asymp t^{\log3 /\log 5},  \qquad t\to \infty.
\end{equation*}

\medskip
In the following, our concentration is on the case  $a_0>b_0 =c_0$.   First we provide a general result on the dimension of some linear subspaces.  Recall that a linear subspace $\mathcal{L}$ of  $L^2(K,\mu)$ is called a {\it sublattice} if $u\in\mathcal{L}$ implies $|u|\in\mathcal{L}$.

\medskip

\begin{proposition}\label{th3.1}
Let $K$ be a  compact connected set and $\mu$ be a Borel measure on $K$ with full support, and let $(\mathcal{E},\mathcal{F})$ be a regular Dirichlet form on $L^2(K,\mu)$ with $\mathcal{F}\subset C(K)$. Denote by $\{P_t\}_{t\geq0}$ the associated semigroup of operators of $(\mathcal{E},\mathcal{F})$. Suppose $\mathcal{L}\subset {\mathcal F}$ is a closed linear sublattice of $L^2(K,\mu)$, and there exists  $C>0$ such that
\begin{equation}\label{eq3.2}
P_tu\leq Cu, \qquad \forall \ t>0, \ \  u \geq 0, \ \ u\in \mathcal{L}.
\end{equation}
Then $\mathcal{L}$ has dimension at most one.
\end{proposition}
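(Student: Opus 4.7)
The plan is to reduce the statement to two claims: (1) every nonzero $u \in \mathcal{L}^+ := \{u \in \mathcal{L} : u \geq 0\}$ is strictly positive on all of $K$; and (2) any two nonzero elements of $\mathcal{L}^+$ are proportional. Since the sublattice property gives $f = f^+ - f^-$ with $f^\pm \in \mathcal{L}^+$, we have $\mathcal{L} = \text{span}(\mathcal{L}^+)$, and (1) plus (2) would force $\dim \mathcal{L} \leq 1$.

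For (1), I would fix a nonzero $u \in \mathcal{L}^+$ and note that $0 \leq P_t u \leq Cu$ forces $P_t u = 0$ on $\{u = 0\}$ for every $t > 0$. Read probabilistically, this says the Hunt process associated to $(\mathcal{E}, \mathcal{F})$ cannot leave $\{u = 0\}$ once inside; using symmetry of $P_t$ together with sub-Markovianity, one upgrades this to $(\mathcal{E}, \mathcal{F})$-invariance of the decomposition $K = \{u = 0\} \sqcup \{u > 0\}$ in the standard sense that multiplication by $\mathbf{1}_{\{u = 0\}}$ preserves $\mathcal{F}$.

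The key step is then to promote this $L^2$ invariance to a topological statement via the assumption $\mathcal{F} \subset C(K)$. For every $f \in \mathcal{F}$, the function $\mathbf{1}_{\{u = 0\}} f$ lies in $\mathcal{F}$ and is therefore continuous; a continuity check at any $x \in \partial \{u = 0\}$ (approaching once from within $\{u = 0\}$ and once from $\{u > 0\}$) forces $f(x) = 0$. By regularity, $\mathcal{F}$ is sup-norm dense in $C(K)$, so every continuous function on $K$ must vanish on $\partial \{u = 0\}$, forcing $\partial \{u = 0\} = \emptyset$. Hence $\{u = 0\}$ is clopen in $K$, and connectedness together with $u \neq 0$ forces $\{u = 0\} = \emptyset$, i.e.\ $u > 0$ on $K$.

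For (2), take nonzero $u_1, u_2 \in \mathcal{L}^+$, and apply (1) to $(u_1 - t u_2)^+ \in \mathcal{L}^+$ for each $t > 0$: this function is either identically zero (giving $u_1 \leq t u_2$ on $K$) or strictly positive on $K$ (giving $u_1 > t u_2$ on $K$). Requiring this dichotomy to hold for every $t > 0$ forces the continuous function $u_1/u_2$ on the compact set $K$ to be constant, for otherwise any $t$ strictly between $\min_K u_1/u_2$ and $\max_K u_1/u_2$ violates both alternatives. Thus $u_1 = \lambda u_2$ for some $\lambda > 0$, and the proof is complete. The main obstacle will be the irreducibility step in paragraph three: the hypotheses do not include irreducibility of $(\mathcal{E}, \mathcal{F})$ outright, and one must carefully extract it from the interplay of connectedness of $K$, $\mathcal{F} \subset C(K)$, and the regularity of the form.
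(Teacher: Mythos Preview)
Your proposal is correct and follows essentially the same strategy as the paper: both show that a nonzero nonnegative $u\in\mathcal{L}$ must be strictly positive on $K$ by establishing $P_t$-invariance of $\{u>0\}$, then using $\mathcal{F}\subset C(K)$ together with connectedness of $K$ to rule out a nontrivial zero set, and finally leveraging the sublattice property for the dimension bound. The paper differs only cosmetically---it passes through the auxiliary cone $\mathcal{G}=\{v\in C(K):|v|\le\alpha u\text{ for some }\alpha\}$ to obtain invariance and then cites \cite[Theorem 1.6.1]{FOT} to get $1_U\in\mathcal{F}\subset C(K)$ directly (hence $U$ clopen), whereas you reach the same clopen conclusion via $1_{\{u=0\}}f\in\mathcal{F}$ and a boundary argument; your ratio $u_1/u_2$ step is the paper's $u+\eta v$ sign-change argument restated.
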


\medskip

\begin{proof}
The essentially idea of the proof comes  from \cite[Theorems 7.2, 7.3]{D80}.
 Suppose $\mathcal{L}$ is nontrivial, let $u\geq0$ be any non-zero element in $\mathcal{L}$, then $u\in C(K)$.  Let $ U=\{x\in K:\ u(x)\neq0\}$. We claim that  $U=K$, modulo a $\mu$-null set. If $ v \in C(K)$ and $|v|\leq \alpha u$ for some $\alpha\geq0$, then by the Markovian property of $\{P_t\}_{t>0}$ and (\ref{eq3.2}), we have
 \begin{equation*}
 |P_tv|\leq P_t|v|\leq\alpha P_t u\leq \alpha C u.
 \end{equation*}
 Hence for
 \begin{equation*}
 {\mathcal G}=\{v\in C(K):|v|\leq\alpha u \  \text{ for some }\alpha\geq0\},
 \end{equation*}
then $P_t({\mathcal G})\subseteq {\mathcal G}$ for all $t\geq0$.
As $U$ is an open set by definition, ${\mathcal G}$ contains all the continuous functions that are compactly supported in $U$.  The $L^2$-closure of ${\mathcal G}$ is the set of all $v\in L^2(K,\mu)$ with $v =0$ on $K \setminus U$. So $U$ is an \emph{invariant} set of the semigroup $\{P_t\}_{t>0}$. (A $\mu$-measurable set $B\subset K$ is said to be $P_t$--invariant if $P_t(1_Bf)=1_BP_tf$
$\mu$-a.e. for any $f\in L^2$ and $t>0$.) Hence by \cite[Theorem 1.6.1]{FOT}, $1_U\in\mathcal{F}$. However, as $K$ is connected,  this holds if and only if $U=K$ or $U=\emptyset$. Since $u$ is nonzero, we conclude that $U=K$, and  the claim follows.

\vspace {0.1cm}

  Now, if $u\in\mathcal{L}$, then $u^+$ and $u^-$ are in $\mathcal{L}$ and have disjoint supports. It follows from the claim  that one of them must vanish. Hence $u\in\mathcal{L}$ implies  $u\geq0$ or $(-u)\geq0$. If $u,v$ are two distinct positive elements of $\mathcal{L}$, then $u+\eta v$ is either positive or negative for all $\eta\in\mathbb{R}$.  But the sum must change sign as $\eta$ increases through ${\Bbb R}$.  Hence there is $\eta$ such that $u+\eta v=0$. This is a contradiction,  and hence $\mathcal{L}$ is one dimensional.
\end{proof}

\medskip

\begin{lemma}\label{th3.2}
Let $K$ be the Sierpi\'nski gasket and $\mu$ be the normalized Hausdorff measure on $K$. Let $(\mathcal{E}^{(a,b)},\mathcal{F})$ be the Dirichlet form defined in Theorem \ref{th1.2}. Let $\Lambda_1$ be the eigenfunction space of
$\lambda_1$, the first eigenvalue of $-\Delta$ with Dirichlet boundary condition.  Then $\Lambda_1$ is of dimension one.
\end{lemma}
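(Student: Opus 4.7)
The plan is to apply Proposition~\ref{th3.1} with $\mathcal{L}=\Lambda_1$, so I need to verify the four ingredients: $\Lambda_1\subset\mathcal{F}$, $\Lambda_1$ is closed in $L^2(K,\mu)$, $\Lambda_1$ is a sublattice, and the semigroup inequality $P_tu\leq Cu$ for nonnegative $u\in\Lambda_1$.

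First I would note that any Dirichlet eigenfunction lies in the domain of the generator and hence in $\mathcal{F}$, so $\Lambda_1\subset\mathcal{F}$. Since $\mathcal{F}$ is compactly embedded in $L^2(K,\mu)$ (as recalled in the opening of Section~3), the spectrum of $-\Delta^{(a_0,b_0)}$ is discrete, so $\Lambda_1$ is finite-dimensional and therefore closed in $L^2(K,\mu)$; the same discreteness together with the positivity of $\mathcal{E}^{(a_0,b_0)}$ guarantees that $\lambda_1$ is actually attained, so $\Lambda_1\neq\{0\}$. For the semigroup estimate, if $u\in\Lambda_1$ then by spectral calculus $P_tu=e^{-\lambda_1 t}u$, and because $\lambda_1\geq 0$ we have $e^{-\lambda_1 t}\leq 1$; consequently $P_tu\leq u$ whenever $u\geq 0$, giving the required inequality with $C=1$.

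The main step is the sublattice property. For any $u\in\mathcal{F}$, the Markov property of the regular Dirichlet form $(\mathcal{E}^{(a_0,b_0)},\mathcal{F})$ (established in Theorem~\ref{th1.2}) yields $|u|\in\mathcal{F}$ together with $\mathcal{E}^{(a_0,b_0)}(|u|)\leq\mathcal{E}^{(a_0,b_0)}(u)$. Combining this with the Rayleigh quotient characterization
\[
\lambda_1=\inf\Bigl\{\mathcal{E}^{(a_0,b_0)}(v)/\|v\|_{L^2}^2:\ v\in\mathcal{F},\ v\not\equiv 0\Bigr\},
\]
if $u\in\Lambda_1$ then $\mathcal{E}^{(a_0,b_0)}(u)=\lambda_1\|u\|_{L^2}^2$, and since $\||u|\|_{L^2}=\|u\|_{L^2}$ the quotient $\mathcal{E}^{(a_0,b_0)}(|u|)/\||u|\|_{L^2}^2$ cannot fall below $\lambda_1$. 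Equality therefore holds throughout, so $|u|$ is itself a first Dirichlet eigenfunction and $|u|\in\Lambda_1$. This is the crux of the argument.

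Once all four hypotheses are in place, Proposition~\ref{th3.1} yields $\dim\Lambda_1\leq 1$, and combined with $\Lambda_1\neq\{0\}$ we conclude $\dim\Lambda_1=1$. The step I expect to require the most care is verifying the Markov inequality $\mathcal{E}^{(a_0,b_0)}(|u|)\leq\mathcal{E}^{(a_0,b_0)}(u)$ and the observation that $|u|$ is continuous when $u$ is (so that boundary values are preserved), but both follow from general regular Dirichlet form theory applied to the form constructed in Theorem~\ref{th1.2}.
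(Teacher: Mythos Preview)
Your approach is essentially identical to the paper's: verify that $\Lambda_1$ is a closed sublattice via the Rayleigh quotient and the Markov property, check the semigroup bound $P_tu=e^{-\lambda_1 t}u\leq u$, and apply Proposition~\ref{th3.1}. One small correction: since the lemma concerns the Dirichlet boundary condition, the Rayleigh quotient must be taken over $\mathcal{F}_0=\{v\in\mathcal{F}:v|_{V_0}=0\}$ rather than all of $\mathcal{F}$ (over the latter the infimum is $0$, realized by constants); you implicitly acknowledge this when you note that boundary values are preserved under $u\mapsto|u|$, but the displayed formula should reflect it.
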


\begin{proof}
 We make use of the Rayleigh quotient for the first eigenvalue:
\begin{equation}\label{eq3.3}
\lambda_1=\inf\limits_{u\in\mathcal{F}_0,u\neq0}\frac{\mathcal{E}(u)}{||u||^2_2},
\end{equation}
where $\mathcal{F}_0:=\{u\in\mathcal{F}: u|_{V_0}=0\}$. There exists a function $u\in {\mathcal F}$ attains the infimum, and all such functions  must be  eigenfunctions with eigenvalue $\lambda_1$. Therefore by the Markovian property of the Dirichlet form, we see that $\Lambda_1$ is a closed {\em sublattice}, hence also $u^+$, $u^-$ are contained in $\Lambda_1$. For any $u\in\Lambda_1$, we have
\begin{equation*}
 P_t u=\sum\limits_{n=0}^\infty\frac{t^n}{n!}\Delta^nu=
 \sum\limits_{n=0}^\infty\frac{t^n}{n!}\left(-\lambda_1\right)^nu= e^{-t\lambda_1}u\leq u.
 \end{equation*}
 By using Proposition \ref{th3.1} with $\mathcal{L}=\Lambda_1$, we see that $\Lambda_1$ is of dimension at most one, and thus $\Lambda_1$ is one dimensional since $\Lambda_1$ is nontrivial.
 \end{proof}

\bigskip

\begin{lemma}\label{th3.3}
There exists $C>0$ such that for  any initial data $a>b=c>0 $ on $\Gamma_0$, we have
\begin{equation}\label{eq3.4}
C^{-1}b\leq\lambda_1^{(a,b)}\leq Cb,
\end{equation}
where $\lambda_1^{(a,b)}$ is the first eigenvalues of $-\Delta^{(a,b)}$ with the Dirichlet boundary condition.
\end{lemma}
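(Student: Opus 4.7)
The plan is to prove the two inequalities separately. The lower bound follows directly from the uniform resistance-diameter estimate $\sup_{x,y\in K}R^{(a,b)}(x,y)\leq C'/b$ of Proposition~\ref{th2.3}, while the upper bound demands a carefully chosen test function engineered so that every strong ($a_1$-)bond on the first level carries zero potential difference, preventing the large conductance from inflating the energy.

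For the lower bound, given any $u\in\mathcal{F}_0$, apply the standard resistance-form inequality to $u(x)-u(p_1)=u(x)$ (since $u(p_1)=0$):
$$|u(x)|^2\leq R^{(a,b)}(x,p_1)\,\mathcal{E}^{(a,b)}(u)\leq (C'/b)\,\mathcal{E}^{(a,b)}(u),\qquad x\in K.$$
Integrating against $\mu$ (and using $\mu(K)=1$) gives $\|u\|_2^2\leq (C'/b)\mathcal{E}^{(a,b)}(u)$, so the Rayleigh formula \eqref{eq3.3} yields $\lambda_1^{(a,b)}\geq (C')^{-1}b$.

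For the upper bound, I will test the Rayleigh quotient against the level-$1$ piecewise-harmonic function $\tilde u\in\mathcal{F}_0$ prescribed by
$$\tilde u(p_1)=\tilde u(p_2)=\tilde u(p_3)=\tilde u(p_{23})=0,\qquad \tilde u(p_{12})=\tilde u(p_{13})=1.$$
The strong edge of each of the three level-$1$ cells --- namely $p_{12}p_{13}$ in cell $1$, $p_2p_{23}$ in cell $2$, $p_3p_{23}$ in cell $3$ --- has equal values at its two endpoints, so the $a_1$-terms drop out of $\mathcal{E}_1^{(a,b)}(\tilde u)$ and only the six $b_1$-edges (each of unit magnitude) contribute:
$$\mathcal{E}^{(a,b)}(\tilde u)=\mathcal{E}_1^{(a,b)}(\tilde u|_{V_1})=6\,b_1\leq Cb,$$
the last inequality because a direct computation with Lemma~\ref{th2.2} gives $b_1/b_0\in[3/2,5/3]$.

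The main obstacle, and where the real work lies, is establishing the uniform lower bound $\|\tilde u\|_2^2\geq c>0$ independent of the ratio $a/b$. My plan is to use Cauchy--Schwarz, $\|\tilde u\|_2^2\geq\bigl(\int_K\tilde u\,d\mu\bigr)^2$, and bound $\int_K\tilde u\,d\mu$ from below. By the self-similarity of $\mu$, $\int_{F_i(K)}\tilde u\,d\mu=(1/3)\int_K(\tilde u\circ F_i)\,d\mu$, and each $\tilde u\circ F_i$ is harmonic on $K$ with respect to $\mathcal{E}^{(a_1,b_1)}$ with explicit boundary data: $(0,1,1)$ for $i=1$ and $(1,0,0)$ for $i=2,3$. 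Solving the $3\times 3$ harmonicity systems (as in the derivation of \eqref{eq2.13}) gives, for the $(0,1,1)$ case, the midpoint values $(2a_1+b_1)/(3a_1+2b_1)\geq 3/5$ and $(3a_1+b_1)/(3a_1+2b_1)\geq 4/5$, and for the $(1,0,0)$ case, the symmetric midpoint values $(a_1+b_1)/(3a_1+2b_1)\geq 1/3$, all uniformly in $a_1\geq b_1>0$. The maximum principle then applies on those level-$2$ sub-subcells whose three boundary values are all bounded below by a universal constant --- namely $F_1F_2$ and $F_1F_3$ inside $F_1(K)$ (forcing $\tilde u\geq 3/5$ on them) and the sub-subcell $F_iF_1$ inside each of $F_2(K),F_3(K)$ (forcing $\tilde u\geq 1/3$) --- and these four regions each have $\mu$-mass $1/9$. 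Summing gives $\int_K\tilde u\,d\mu\geq 2/15+2/27>0$ uniformly in $(a,b)$, and plugging into the Rayleigh quotient yields $\lambda_1^{(a,b)}\leq Cb$, completing the proof.
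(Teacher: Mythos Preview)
Your argument is correct and follows essentially the same route as the paper: the lower bound via the resistance-diameter estimate \eqref{eq2.11} and the upper bound via the same level-$1$ test function $\tilde u$, with $\mathcal{E}^{(a,b)}(\tilde u)=6b_1$ and an $L^2$ lower bound obtained from the minimum principle on level-$2$ subcells (the paper uses only $F_{21}(K)\cup F_{31}(K)$ and bounds $\|\tilde u\|_2^2$ directly, whereas you pass through Cauchy--Schwarz and use two extra cells in $F_1(K)$, but the substance is the same). One small slip: since $\tilde u\circ F_i$ is harmonic for $\mathcal{E}^{(a_1,b_1)}$, the midpoint formulas from \eqref{eq2.13} should carry the level-$2$ conductances $a_2,b_2$ rather than $a_1,b_1$; your inequalities are unaffected because they only use $a_2\ge b_2$.
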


\medskip

\begin{proof}  We will make use of the Rayleigh quotient  in \eqref{eq3.3} again.
 Let $u_1$ be the {\em1-piecewise harmonic function on $K$} with prescribed values $u_1(p_1)=u_1(p_2)=u_1(p_3)=u_1(p_{23})=0$, $u_1(p_{12})=u_1(p_{13})=1$, where $p_{ij}$ is the vertex in $V_1$ opposite to $p_k$ for distinct $i,j,k\in\{1,2,3\}$ (see Figure \ref{fig3} for the values of $u_1$).
 \begin{figure}[h]
\textrm{\centering
\scalebox{0.25}[0.25]{\includegraphics{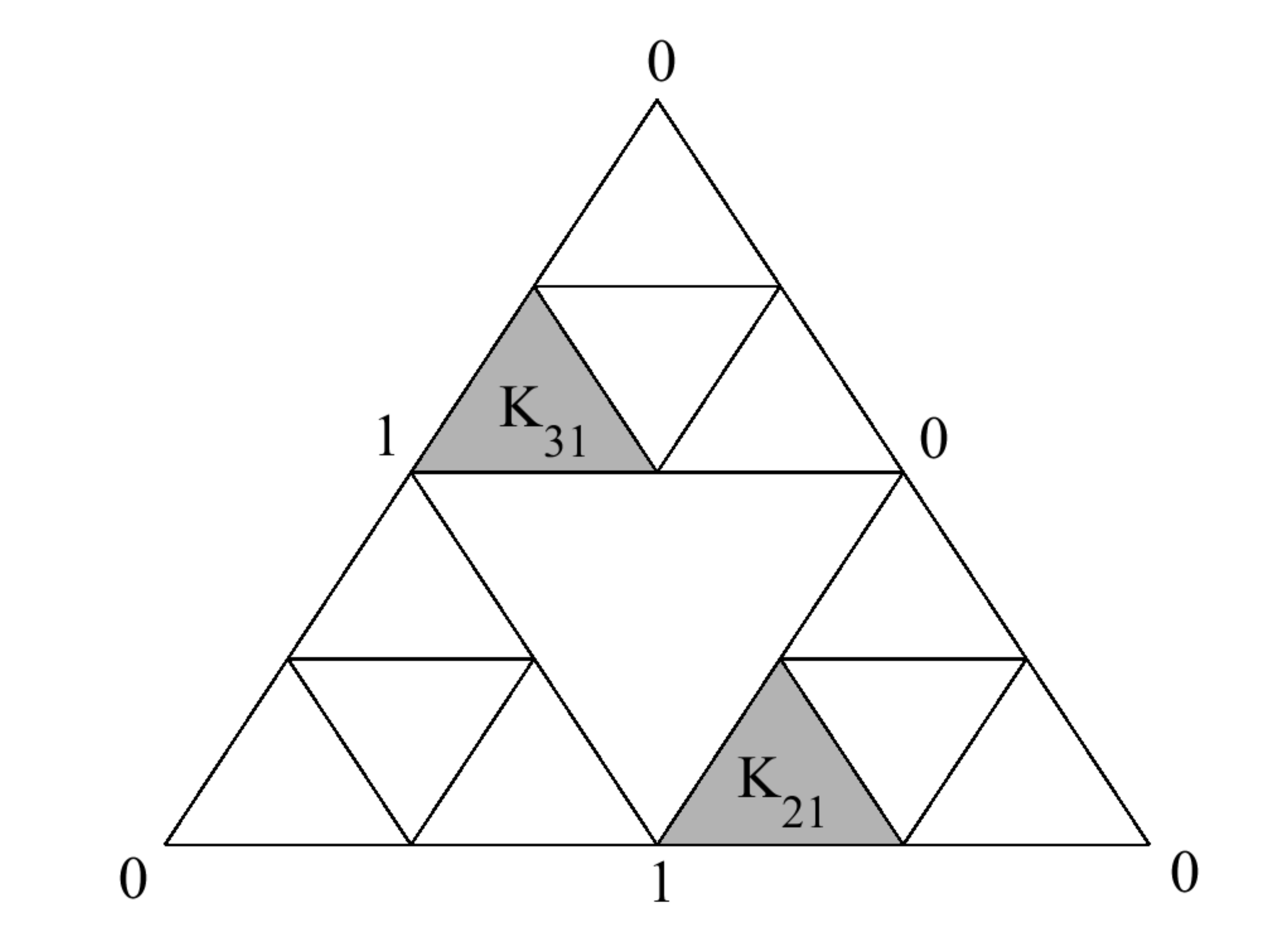}}
}
\caption{The value of $u_1$}
\label{fig3}
\end{figure}
Then by \eqref{eq2.12}
$$
||u_1||^2_2\geq\int_{F_{21}(K)\cup F_{31}(K)}u_1^2d\mu\geq  \frac29\cdot\left(\frac{a_2+b_2}{3a_2+2b_2}\right)^2 \geq \frac2{81},
$$
where $a_2, b_2$ are the second iterations of $a\ (=a_0),\  b\ (=b_0)$ respectively.
Also observe that  $\mathcal{E}^{(a,b)}(u_1)=6b_{1}$.
Therefore
$$
\lambda_1^{(a,b)}\leq\frac{\mathcal{E}^{(a,b)}(u_1)}{||u_1||^2_2}\leq C'b_{1}\leq Cb
$$
for some $C',C>0$.

\medskip

To estimate the lower bound, we  let  $u\in \mathcal{F}$,  then
$$
|u(x)-u(y)|^2\leq R^{(a,b)}(x,y)\mathcal{E}^{(a,b)}(u), \quad  x,y\in K.
$$
It follows that  for $u\in\mathcal{F}_0 = \big \{ u \in {\mathcal F}:  u|_{V_0}=0\big \}$, $u\neq 0$, by choosing $y=p_3$, we have
$$
|u(x)|^2\leq R^{(a,b)}(x,p_3)\mathcal{E}^{(a,b)}(u), \quad\forall x\in K.
$$
Integrating  both sides with respect to $\mu$, we obtain
$$
||u||_2^2\leq \int_K R^{(a,b)}(x,p_3)d\mu(x)\cdot\mathcal{E}^{(a,b)}(u).
$$
Recall that the resistance $R(x, y), \ x, y \in K$ has the expression
$
R(x,y)=\sup \big \{\frac{|u(x)-u(y)|^2}{\mathcal{E}(u)}: \ u\in\mathcal{F},\mathcal{E}(u)\neq 0 \big \}
$.
Using (\ref{eq2.11}), we have $C_1>0$ such that
$$
C_1b\leq\frac{\mathcal{E}^{(a,b)}(u)}{||u||_2^2}.
$$
Since $u$ is arbitrary, this implies that
$
 C_1b  \leq \lambda_1^{(a,b)}.
$
This completes the proof of the lemma.
\end{proof}

\bigskip

\begin{lemma}\label{th3.4}  Let $a_0 > b_0 = c_0$, then
for all $t\geq0$ and $n\geq0$,
\begin{equation}\label{eq3.5}
3^n\rho^{(a_n,b_n)}\left(\frac t{3^n}\right)\leq \rho^{(a_0,b_0)}(t), \quad \hbox {and} \quad  \rho^{(a_0,b_0)}_N(t) \leq 3^n\rho^{(a_n,b_n)}_N\left(\frac t{3^n}\right).
\end{equation}
\end{lemma}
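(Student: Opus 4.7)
\medskip

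\noindent\textbf{Proof proposal.} The plan is to derive both inequalities by Dirichlet--Neumann bracketing applied to the cell decomposition at level $n$, using the self-similar energy identity \eqref{eq2.14} together with the measure scaling $\mu(F_\omega(A)) = 3^{-n}\mu(A)$. The unifying observation is: if $v \in \mathcal{F}$ with $v|_{V_0}=0$ and $u$ equals $v \circ F_\omega^{-1}$ on $F_\omega(K)$ and $0$ elsewhere, then by \eqref{eq2.14} and the measure scaling
\begin{equation*}
\mathcal{E}^{(a_0,b_0)}(u) \;=\; \mathcal{E}^{(a_n,b_n)}(v), \qquad \|u\|_{L^2(K,\mu)}^2 \;=\; 3^{-n}\,\|v\|_{L^2(K,\mu)}^2,
\end{equation*}
so the Rayleigh quotient of $u$ is exactly $3^n$ times that of $v$. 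This supplies the factors $3^n$ and $1/3^n$ in both directions of \eqref{eq3.5}.

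\medskip

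For the first (Dirichlet) inequality, set $m = \rho^{(a_n,b_n)}(t/3^n)$ and choose $L^2$-orthogonal Dirichlet eigenfunctions $\phi_1,\ldots,\phi_m \in \mathcal{F}_0$ of $-\Delta^{(a_n,b_n)}$ with eigenvalues $\le t/3^n$. For each $\omega \in W_n$, transplant them by $\phi_{i,\omega} := \phi_i \circ F_\omega^{-1}$ on $F_\omega(K)$ and $0$ elsewhere. The condition $\phi_i|_{V_0}=0$ makes $\phi_{i,\omega}$ continuous on $K$, and $V_0 \cap F_\omega(K) \subseteq F_\omega(V_0)$ forces $\phi_{i,\omega}|_{V_0}=0$, whence $\phi_{i,\omega}\in\mathcal{F}_0$. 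The displayed scaling shows each $\phi_{i,\omega}$ has Rayleigh quotient $\le t$. These $3^n m$ functions are linearly independent (for different $\omega$ the supports overlap only on the finite set $V_n$; for fixed $\omega$ the $\phi_i$'s are independent). The min--max characterization of Dirichlet eigenvalues then yields $\rho^{(a_0,b_0)}(t) \ge 3^n m$.

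\medskip

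For the second (Neumann) inequality, introduce the decoupled quadratic form
\begin{equation*}
\widetilde{\mathcal{E}}(u) \;=\; \sum_{\omega\in W_n}\mathcal{E}^{(a_n,b_n)}(u\circ F_\omega), \qquad \widetilde{\mathcal{F}} \;=\; \bigl\{u\in L^2(K,\mu):\ u\circ F_\omega\in\mathcal{F}\ \text{for every}\ \omega \in W_n\bigr\},
\end{equation*}
which relaxes the continuity requirement across cell boundaries (irrelevant at the $L^2$ level, since $V_n$ is $\mu$-null). By \eqref{eq2.14}, $\widetilde{\mathcal{E}}$ agrees with $\mathcal{E}^{(a_0,b_0)}$ on $\mathcal{F}\subseteq\widetilde{\mathcal{F}}$, so as closed forms (extended by $+\infty$) one has $\widetilde{\mathcal{E}}\le\mathcal{E}^{(a_0,b_0)}$, and hence $\rho_N^{(a_0,b_0)}(t)\le\widetilde{\rho}(t)$ for the corresponding eigenvalue count. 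Via the orthogonal decomposition $L^2(K,\mu)=\bigoplus_{\omega} L^2(F_\omega(K),\mu)$ and the scaling above, $\widetilde{\mathcal{E}}$ is unitarily equivalent to the direct sum of $3^n$ copies of $\mathcal{E}^{(a_n,b_n)}$ with all Neumann eigenvalues multiplied by $3^n$. Therefore $\widetilde{\rho}(t)=3^n\rho_N^{(a_n,b_n)}(t/3^n)$, completing the estimate.

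\medskip

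I expect the main technical obstacle to be the Neumann direction: one has to verify carefully that $(\widetilde{\mathcal{E}},\widetilde{\mathcal{F}})$ is a closed symmetric form on $L^2(K,\mu)$ and that its cell-wise decoupling is a genuine orthogonal decomposition at the operator level, so that counting functions add. The Dirichlet half is essentially a clean test-function argument once the scaling identity is in hand.
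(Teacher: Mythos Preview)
Your proposal is correct and follows essentially the same Dirichlet--Neumann bracketing strategy as the paper: restrict to functions vanishing on $V_n$ for the Dirichlet bound, and enlarge to the decoupled domain allowing discontinuities across $V_n$ for the Neumann bound, invoking \eqref{eq2.14} and the measure scaling in each case. The only cosmetic difference is that the paper carries out the $n=1$ step (using the subdomains $\mathcal{F}_1=\{u\in\mathcal{F}:u|_{V_1}=0\}$ and $\mathcal{F}_2$) and then iterates via \eqref{eq1.5}, whereas you jump directly to level $n$; your test-function formulation for the Dirichlet side and the paper's form-restriction formulation are equivalent.
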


\medskip
  Here  $\rho^{(a_n b_n)}(t)$ is the eigenvalue counting function using  $a_n> b_n= c_n$ as initial data on $V_0$. We refer to the similar proof in \cite[Propsitions 6.2, 6.3]{KL93}.
   The technique is that first we  restrict $\mathcal{E}^{(a_0,b_0)}$  on the sub-domain ${\mathcal{F}}_1:=\{u\in \mathcal{F} : u|_{V_1}=0\}$. Denote by $\rho\left(t;{\mathcal{E}}^{(a_0,b_0)},{\mathcal{F}}_1\right)$ the corresponding eigenvalue counting function, then by making use of  the identity (\ref{eq1.5}) we have the following relation
 \begin{equation*}
\rho\left(t;{\mathcal{E}}^{(a_0,b_0)},{\mathcal{F}}_1\right)=3\rho^{(a_1,b_1)}\left(\frac t3\right).
\end{equation*}
where $\frac{1}{3}$ in the bracket is the  scaling factor of $\mu$.
Using this repeatedly and that $\rho\left(t;{\mathcal{E}}_0^{(a,b)},{\mathcal{F}}_1\right)\leq\rho^{(a,b)}
(t)$, we obtain the first inequality in (\ref{eq3.5}). The second inequality can be shown by constructing another
Dirichlet form which has  domain ${\mathcal{F}}_2:=\{u: K\setminus V_1\rightarrow \mathbb{R} : u\circ F_i=f_i \text{ on $K\setminus V_0$ for some } f_i\in \mathcal{F}, i=1,2,3\}$ and using a similar argument.

\bigskip

\begin{theorem}  \label {th3.5}
Assume that $a_0>b_0=c_0$ on $\Gamma_0$, then  for $t_0 = \inf \{t:  \rho^{(a_0,b_0)}(t) >0\}$,
\begin {equation*}
 \rho^{(a_0,b_0)}(t)\ \asymp \  t^{\frac{\log3}{\log(9/2)}},\qquad  t>t_0  .
\end{equation*}
Similarly, the same inequality holds when $\rho^{(a_0,b_0)}(t)$ is replaced by $\rho^{(a_0,b_0)}_N(t)$  and  for any $t_0>0$.
\end{theorem}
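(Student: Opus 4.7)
The plan is to derive Theorem \ref{th3.5} purely by iterating Lemma \ref{th3.4} and choosing the iteration depth $n=n(t)$ so that the rescaled spectral parameter $t/3^n$ straddles the first eigenvalue $\lambda_1^{(a_n,b_n)}$ at the $n$-th level. The three ingredients I would combine are: (i) Lemma \ref{th3.4}, which decouples $\rho^{(a_0,b_0)}(t)$ from $\rho^{(a_n,b_n)}(t/3^n)$ (and likewise for $\rho_N$); (ii) Lemma \ref{th3.3}, whose crucial feature is that the constant $C$ in $C^{-1}b \leq \lambda_1^{(a,b)} \leq Cb$ does \emph{not} depend on the initial data, so it may be re-applied at every scale $n$ to the initial data $(a_n,b_n,b_n)$ on $\Gamma_0$; and (iii) the estimate $b_n \asymp (3/2)^n$ from Theorem \ref{th1.2}, which turns $3^n b_n$ into $(9/2)^n$ and will be the source of the exponent $\log 3/\log(9/2)$.

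For the lower bound I would fix $n = n(t)$ to be the largest integer such that $C b_n \cdot 3^n \leq t$, where $C$ is the upper constant in Lemma \ref{th3.3}. Then $\lambda_1^{(a_n,b_n)} \leq Cb_n \leq t/3^n$, so at least one Dirichlet eigenvalue of $-\Delta^{(a_n,b_n)}$ lies in $[0,t/3^n]$, giving $\rho^{(a_n,b_n)}(t/3^n) \geq 1$. The first half of \eqref{eq3.5} then yields $\rho^{(a_0,b_0)}(t) \geq 3^n$. Since $3^n b_n \asymp (9/2)^n$ by Theorem \ref{th1.2}, the maximality of $n$ forces $(9/2)^n \asymp t$, and hence $3^n \asymp t^{\log 3/\log(9/2)}$.

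For the upper bound I would symmetrically take $n$ to be the smallest integer with $C^{-1} b_n \cdot 3^n > t$ (same universal $C$). Then $\lambda_1^{(a_n,b_n)} \geq C^{-1} b_n > t/3^n$, so $\rho^{(a_n,b_n)}(t/3^n) = 0$, and by \eqref{eq3.1} $\rho^{(a_n,b_n)}_N(t/3^n) \leq 3$. Applying the second half of \eqref{eq3.5} and once more \eqref{eq3.1} gives
\begin{equation*}
\rho^{(a_0,b_0)}(t) \leq \rho^{(a_0,b_0)}_N(t) \leq 3^n \rho^{(a_n,b_n)}_N(t/3^n) \leq 3 \cdot 3^n,
\end{equation*}
and again minimality of $n$ together with $3^n b_n \asymp (9/2)^n$ gives $3^n \asymp t^{\log 3/\log(9/2)}$. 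Both bounds are uniform in $t > t_0$ once the multiplicative constants absorb the finitely many values near $t_0$, and the Neumann statement follows from the two-sided comparison in \eqref{eq3.1}.

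I do not expect a real obstacle in the argument itself; the content has been packed into the preceding lemmas. The only points to be careful about are that Lemma \ref{th3.3} is applied with its \emph{universal} constant (so that the iteration survives), and that the choice of $n$ is genuinely available for every $t>t_0$, which is guaranteed because $3^n b_n$ is strictly monotone and unbounded in $n$. The underlying conceptual reason for the exponent $\log 3/\log(9/2)$ is transparent from the argument: the factor $3$ comes from the three-cell self-similar decomposition encoded in Lemma \ref{th3.4}, while the factor $9/2$ is precisely the product of the mass-scaling $3$ and the first-eigenvalue-scaling $3/2 = \lim b_{n-1}/b_n$ from Theorem \ref{th1.2}.
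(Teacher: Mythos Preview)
Your proof is correct and follows the same overall strategy as the paper: iterate Lemma \ref{th3.4}, use Lemma \ref{th3.3} to locate $\lambda_1^{(a_n,b_n)}$ uniformly in $n$, and convert via $3^n b_n \asymp (9/2)^n$ from Theorem \ref{th1.2}. There is one noteworthy difference in the upper bound. The paper evaluates at $t = 3^n \lambda_1^{(a_n,b_n)}$ itself and therefore needs to know that $\rho^{(a_n,b_n)}(\lambda_1^{(a_n,b_n)}) = 1$; this is why Lemma \ref{th3.2} (simplicity of the first eigenvalue, proved through the sublattice argument of Proposition \ref{th3.1}) enters the paper's proof. You instead choose $n$ so that $t/3^n$ falls strictly \emph{below} $\lambda_1^{(a_n,b_n)}$, giving $\rho^{(a_n,b_n)}(t/3^n)=0$ for free, and then pass to $\rho_N$ via \eqref{eq3.1}. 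This bypasses Lemma \ref{th3.2} and Proposition \ref{th3.1} entirely, which is a genuine simplification; the price is only a factor of $3$ in the implicit constant. One small point of wording: the phrase ``absorb the finitely many values near $t_0$'' should really read ``absorb the bounded range of $t$ near $t_0$'', since the issue is that for $t_0 < t \leq C b_0$ no $n \geq 0$ may satisfy your lower-bound inequality, but on that bounded interval $\rho^{(a_0,b_0)}(t) \geq 1$ while $t^{\log 3/\log(9/2)}$ is bounded, so the constant adjusts.
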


\medskip

\begin{proof}
By Lemma \ref{th3.2}, we see that if we use $a_n> b_n =c_n$ as initial data on $\Gamma_0$, then we have $\rho^{(a_n,b_n)}\left(\lambda_1^{(a_n,b_n)}\right)=1$,
and
$\rho_N^{(a_n,b_n)}\left(\lambda_1^{(a_n,b_n)}\right)\leq\rho^{(a_n,b_n)}
\left(\lambda_1^{(a_n,b_n)}\right)+3=4$.
Then by Lemma \ref{th3.4}, we have
\begin{equation*}
\rho_N^{(a_0,b_0)}\left(3^n\lambda_1^{(a_n,b_n)}\right) \leq 4\cdot 3^n.
\end{equation*}
 Letting $t={3^n}\lambda_1^{(a_n,b_n)}$,  by Lemma \ref{th3.3}, we have
$t \asymp  3^n b_n \asymp 3^n (3/2)^n = (9/2)^n$ and $3^n\asymp t^{\log 3/ \log (9/2)}$. It follows that
$$
\rho^{(a_0,b_0)} (t) \leq \rho_N^{(a_0,b_0)}\left(t\right) \leq Ct^{\frac{\log3}{\log(9/2)}}
$$
for some $C>0$. The same argument yields the other inequality.
\end{proof}

\bigskip

Recall that the {\it spectral dimension} $d_s$ of a Dirichlet form is defined to be $\lim\limits_{t\rightarrow\infty}\frac{2\log\rho (t)}{\log t}$ if the limit exists. Heuristically,  $d_s/2 = d_f/ d_w$ where $d_f$ is the Hausdorff  dimension of $K$, and $d_w$ is the {\it walk dimension}  of $K$. The walk dimension is the space-time relation  $E_x (|X_t-x|^2) \approx t^{2/d_w}$ of the associate diffusion process \cite {S}, which is also the critical exponent of the  Besov space corresponding to the domain of the Dirichlet form \cite {GHL, GL, J} (see Section 4). From Theorem \ref{th3.5}, we see that for $a_0> b_0$,
 $$
 d_s = \frac  {\log 9}{\log (9/2)}, \qquad  d_w = \frac {\log 9}{\log 2} -1.
 $$
\bigskip
\bigskip

\section{\bf Other examples and remarks}
\setcounter{equation}{0}\setcounter{theorem}{0}

\bigskip

In this section, we consider two more examples. The first one is a modification of the SG such that  for $a_0 > b_0 = c_0$, the closure of $V_*$ under the resistance metric is different from the SG; the second one is detailed  in \cite {GL}, it is a p.c.f. set constructed by an IFS of $17$ maps with three boundary points,  of which the recursive construction does not yield a compatible sequence of  Dirichlet forms ${\mathcal E}_n, \ n\geq 0$.

\bigskip

Let $p_0=0, p_1=1, p_3 = \exp\left(\frac{\pi\sqrt{-1}}3\right)$.  We define the {\em twisted Sierpi\'nski gasket} \cite {MS} to be the unique nonempty compact set $K$ on $\mathbb{R}^2$ with the contractions $\{T_i\}^3_{i=1}$ such that
$
F_1(x)=\frac{\overline{x -p_1}}2\cdot \exp\left(\frac{\pi\sqrt{-1}}3\right) + p_1, \ F_2(x)=\frac{\overline{x-p_2}}2\cdot \exp\left(-\frac{\pi\sqrt{-1}}3\right)+p_2$, and  $F_3(x)=-\frac{x-p_3}2+p_3$,
(i.e., $F_i$ reflects the sub-triangle $K_i$ along the angle bisection at $p_i$).  Then the attractor $K$ is still the Sierpi\'nski gasket.  In \cite {MS}, Mihai and Strichartz investigated the self-similar energy forms on this twisted SG.

\medskip

Similar to the standard SG, by using the $\Delta$-Y transform (see Figure \ref{fig4}), the compatibility of $\{(x_n,y_n,z_n)\}_{n\geq0}$ must satisfy the following equations:
\begin{equation}\label{eq4.1}
\begin{cases}
x_{n-1}=x_n+\psi (x_n; y_n, z_n),\\
y_{n-1}=y_n+\psi (y_n; z_n, x_n),\\
z_{n-1}=z_n+\psi (z_n; x_n, y_n),
\end{cases}
\qquad  n \geq 1,
\end{equation}
where $\psi (x_n; y_n, z_n) = \frac{2y_nz_n}{x_n+y_n+z_n}$, and symmetrically for $\psi (y_n; z_n, x_n)$ and $\psi (z_n; x_n, y_n)$.
\begin{figure}[h]
\textrm{\centering
\scalebox{0.18}[0.18]{\includegraphics{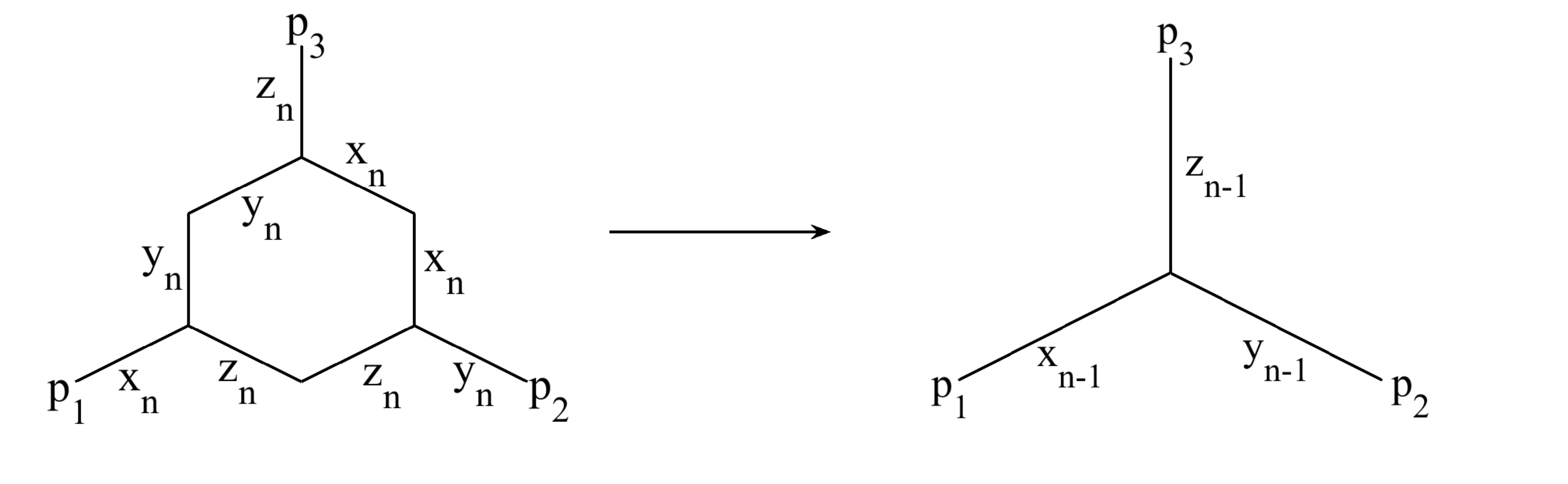}}
}
\caption{$\Delta$-Y transform for the twisted maps}
\label{fig4}
\end{figure}

\begin{lemma}\label{th4.1}
For $x_0, y_0, z_0 >0$, in order for  \eqref{eq4.1} to have positive solution $(x_n, y_n, z_n), n\geq 1$,  it is necessary and sufficient that  $ x_0 \geq y_0=z_0> 0$ (or the symmetric alternates).  In this case,  $\{(x_n, y_n, z_n)\}_{n=0}^\infty$ is uniquely determined by $(x_0, y_0, z_0)$.

\vspace {0.1cm}

Furthermore, for $x_0 > y_0 = z_0$, we  have the estimate  $x_n \asymp 1,  y_n =z_n \asymp \big ( \frac 13 \big )^n$, and hence $a_n\asymp 3^n, \ b_n =c_n \asymp 1.$
\end{lemma}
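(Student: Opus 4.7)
The plan is to mimic the structure of Lemmas~\ref{th2.1} and \ref{th2.2}, with the standard kernel $\phi$ replaced by the twisted kernel $\psi(x;y,z)=2yz/(x+y+z)$ and with the symmetric fibre $\{y=z\}$ still preserved by \eqref{eq4.1}.

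\emph{Sufficiency and asymptotics.} Given $x_0\geq y_0=z_0>0$, I would impose the ansatz $y_n=z_n$, under which \eqref{eq4.1} reduces to
\[
x_{n-1}=x_n+\frac{2y_n^2}{x_n+2y_n},\qquad y_{n-1}=y_n+\frac{2x_ny_n}{x_n+2y_n}.
\]
Setting $t_n:=y_n/x_n\in[0,1]$, these collapse to the scalar recursion $t_{n-1}=f(t_n)$ with
\[
f(t)=\frac{t(3+2t)}{1+2t+2t^2}.
\]
A direct calculation yields $f(0)=0$, $f(1)=1$, $f'(t)=(3+4t-2t^2)/(1+2t+2t^2)^2>0$ on $[0,1]$, and $f(t)-t=2t(1-t^2)/(1+2t+2t^2)\geq 0$, so $f:[0,1]\to[0,1]$ is a strictly increasing homeomorphism with $f(t)>t$ on $(0,1)$. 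Hence $t_n:=f^{-1}(t_{n-1})$ is uniquely determined and strictly decreases to $0$, while the relation $x_{n-1}/x_n=(1+2t_n+2t_n^2)/(1+2t_n)$ uniquely and positively recovers $x_n$. Since $f'(0)=3$ we get $t_n\asymp(1/3)^n$; since $x_{n-1}/x_n=1+O(t_n^2)=1+O(9^{-n})$, the telescoping product converges and $x_n\asymp 1$, hence $y_n=z_n=t_nx_n\asymp(1/3)^n$. Plugging into the $\Delta$-Y identities \eqref{eq2.2} with $r_n=2x_ny_n+y_n^2$ then yields $a_n=x_n/r_n\asymp 3^n$ and $b_n=c_n=y_n/r_n\asymp 1$.

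\emph{Necessity.} Suppose for contradiction that $x_0\geq y_0>z_0>0$ and positive $(x_n,y_n,z_n)$ solving \eqref{eq4.1} exist for all $n\geq 1$. Subtracting and adding the second and third equations of \eqref{eq4.1} yields the key identities
\[
y_{n-1}-z_{n-1}=(y_n-z_n)\cdot\frac{y_n+z_n-x_n}{x_n+y_n+z_n},\qquad y_{n-1}+z_{n-1}=(y_n+z_n)\cdot\frac{3x_n+y_n+z_n}{x_n+y_n+z_n}.
\]
Setting $\alpha_n:=(y_n-z_n)/(y_n+z_n)\in(-1,1)$, these combine to $|\alpha_{n-1}|=|\alpha_n|\cdot|y_n+z_n-x_n|/(3x_n+y_n+z_n)$, with multiplier strictly less than $1$. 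Hence $|\alpha_n|$ is strictly increasing in $n$ and $|\alpha_n|\nearrow\alpha_\infty\leq 1$; the multiplier must therefore tend to $1$, which (by a short case analysis) forces eventually $y_n+z_n>x_n$ and $\sigma_n:=x_n/(y_n+z_n)\to 0$. Substituting this into the first equation of \eqref{eq4.1} gives $\sigma_{n-1}=[\sigma_n(1+\sigma_n)+(1-\alpha_n^2)/2]/(1+3\sigma_n)\to(1-\alpha_\infty^2)/2$, which vanishes only if $\alpha_\infty=1$. Linearising the recursion in the small variables $\epsilon_n:=1-|\alpha_n|$ and $\sigma_n$ then produces a forward propagator with characteristic polynomial $\lambda^2-\lambda-4$ and roots $(1\pm\sqrt{17})/2$ of opposite sign; the negative eigenvalue drives the backward orbit of a generic asymmetric initial datum to exit the positive octant in finite time, contradicting positivity. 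Therefore $y_0=z_0$.

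\emph{Main obstacle.} The sufficiency and the asymptotic estimates are straightforward once the symmetry reduction is in place. The necessity is substantially more delicate than in Lemma~\ref{th2.1}: the antisymmetric multiplier $(y_n+z_n-x_n)/(x_n+y_n+z_n)$ is no longer of fixed sign, so no one-dimensional monotone quantity analogous to $y_n/z_n$ in Lemma~\ref{th2.1}(ii) is available, and the two-dimensional linearised analysis above, with its characteristic negative eigenvalue, is what ultimately rules out asymmetric positive orbits.
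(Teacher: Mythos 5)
Your sufficiency argument and the asymptotics are correct: the symmetric reduction to the scalar map $f(t)=t(3+2t)/(1+2t+2t^2)$ is equivalent to the paper's explicit solution \eqref{eq4.2} (I checked they agree numerically), and your derivation of $t_n\asymp 3^{-n}$, $x_n\asymp 1$ from $f'(0)=3$ and $x_{n-1}/x_n=1+O(t_n^2)$ is clean. Your necessity argument also starts from correct and useful exact identities, and the chain $|\alpha_n|\nearrow\alpha_\infty>0$ $\Rightarrow$ $|m_n|\to1$ $\Rightarrow$ $\sigma_n\to0$ $\Rightarrow$ $\alpha_\infty=1$ is valid. Note this is genuinely different from the paper, which adapts Lemma~\ref{th2.1}(iii) with $\rho=2-\lambda_0^{-1}$ to show $2x_n/(y_n+z_n)\to\infty$; you instead prove the opposite, $x_n/(y_n+z_n)\to0$ (both are conditional on the assumed positivity, and indeed juxtaposing the two would itself finish the proof).

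The last step of your necessity argument, however, has a genuine gap. First, a computational error: linearising $\epsilon_{n-1}=1-(1-\epsilon_n)\frac{1-\sigma_n}{1+3\sigma_n}$ and your own formula for $\sigma_{n-1}$ gives $\epsilon_{n-1}=\epsilon_n+4\sigma_n$, $\sigma_{n-1}=\epsilon_n+\sigma_n$ to first order, i.e.\ the propagator $\left(\begin{smallmatrix}1&4\\1&1\end{smallmatrix}\right)$ with characteristic polynomial $\lambda^2-2\lambda-3$ and eigenvalues $3$ and $-1$, not $\lambda^2-\lambda-4$. More seriously, the mechanism ``the negative eigenvalue drives the orbit out of the positive octant'' does not close the argument: since $\epsilon_n,\sigma_n\to0$, the component along the $(-1)$-eigenvector $(2,-1)$ is forced to tend to $0$ as well, so the orbit never actually leaves the positive quadrant; the linearisation only yields the \emph{approximate} conclusion $\epsilon_n\approx 2\sigma_n$ (i.e.\ $x_n\approx z_n$), and an approximate equality at large $n$ does not contradict $x_0>z_0$ without a careful (and unaddressed) control of the nonlinear remainders. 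The fix is to use a third \emph{exact} multiplicative identity of the same type you already wrote: with $\beta_n:=(x_n-z_n)/(x_n+z_n)$ one has $\beta_{n-1}=\beta_n\cdot\frac{x_n+z_n-y_n}{x_n+3y_n+z_n}$. Since $m_n\to1>0$ the sign of $\alpha_n$ is eventually constant; if $y_n>z_n$ eventually, then $\sigma_n\to0$ and $\alpha_\infty=1$ give $x_n,z_n=o(y_n)$, so the multiplier above tends to $-\tfrac13$ and $|\beta_n|\geq 2|\beta_{n-1}|$ for large $n$; being bounded by $1$, $\beta_n$ must vanish identically, whence $x_0=z_0$, contradicting $x_0\geq y_0>z_0$. (The alternative case $z_n>y_n$ eventually needs the analogous identity for $(x_n-y_n)/(x_n+y_n)$ plus one more step, since $x_0=y_0>z_0$ is not by itself excluded; this is a point where the twisted system is more delicate than \eqref{eq1.4}, because the recursion need not preserve the ordering of the coordinates.)
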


\medskip
\begin{proof} The proof for the first part is the same as Lemma \ref{th2.1}.  For the sufficiency, assuming that $x_0 \geq y_0 =z_0$, we can solve
\begin {equation} \label{eq4.2}
\begin{cases}
x_1=\frac1{10}\left(x_0+2y_0+3\sqrt{9x_0^2-4x_0y_0-4y_0^2}\right),\\
y_1(=z_1)=\frac15\left(3x_0+y_0-\sqrt{9x_0^2-4x_0y_0-4y_0^2}\right),
\end{cases}
\end {equation}
and $x_1 \geq y_1=z_1$, then proceed inductively. For the necessity, we need to change  $\rho = 2-  \frac 1{\lambda_0}$ for the estimation in \eqref{eq2.4}, and make some obvious readjustments on the calculations.

\vspace {0.1cm}
For the second part, we note that  $x_n, y_n$ can be expressed in terms of $x_{n-1}, y_{n-1}$ as in \eqref{eq4.2}. By the same estimation as in Lemma \ref{th2.2},  we have
$x_n \asymp 1,  y_n\asymp \big ( \frac 13 \big )^n$, and the estimate of $a_n, b_n$ follows.
\end{proof}

\bigskip
It follows from the estimation of the $a_n\asymp 3^n, \ b_n =c_n \asymp 1$ that
\begin{align} \label {eq4.3}
{\mathcal E}_n(u)\asymp
 {\small \ \sum_{\omega\in W_n}\Big\{3^n\Big(u_\omega(p_2)-u_\omega(p_3)\Big)^2 + \Big(u_\omega(p_1)-u_\omega(p_2)\Big)^2+\Big(u_\omega(p_1)-
 u_\omega(p_3)\Big)^2 \Big \} }
\end{align}
and the compatibility of $\{{\mathcal E}_n\}_{n\geq 0}$ implies that  for $u \in \ell(V_*)$, ${\mathcal E}(u) = \lim_{n\to \infty}{\mathcal E}_n(u|_{V_n})$ exists.

\bigskip

Let $R := R^{(a_0,b_0)}$ denote the resistance metric on $V_*\times V_*$, and  let $\Omega$ be the completion of $V_*$ with respect to $R$.   We will give a description of the topology and the completion of $(\Omega, R)$.  Let $U_0=\{p_2,p_3\}$, $U_n=\bigcup_{i=1}^3 F_i(U_{n-1})$ for $n\geq1$ and $U_*=\bigcup_{n\geq0}U_n$; also let $W_0 =\{p_1\}$, $W_n = \bigcup_{i=1}^3F_i(W_{n-1})$ for $n\geq1$ and $W_*=\bigcup_{n\geq0}W_n$ (see Figure \ref {fig5}).

\begin{figure}[h]
\textrm{\centering
\scalebox{0.2}[0.2]{\includegraphics{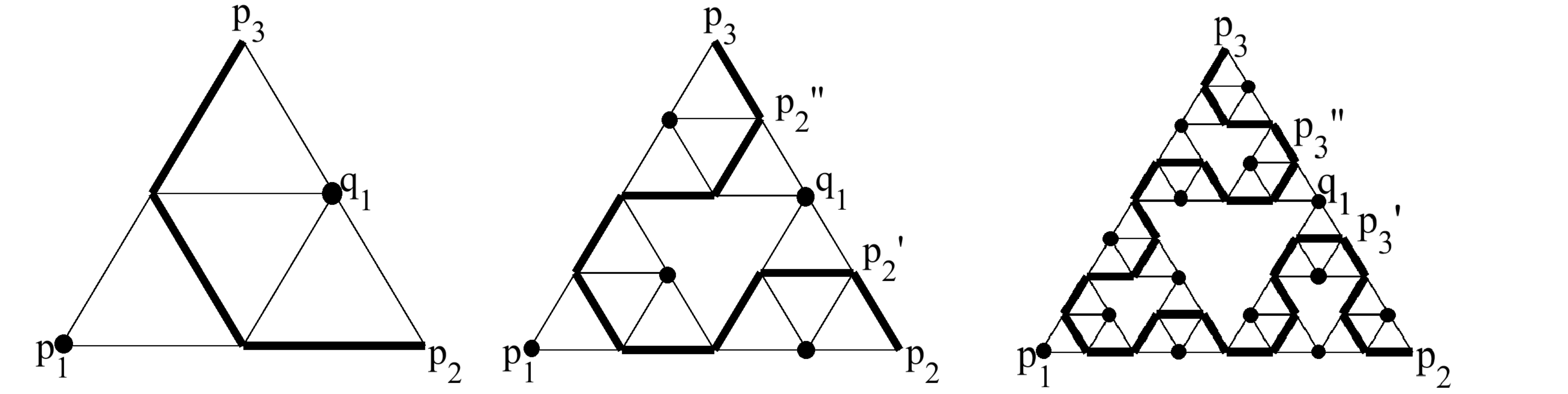}}
}
\caption{$U_n, W_n, n=1,2,3$; bold lines are the edges joining the neighboring points in $U_n$, and the bold dots forms $W_n$.}
\label{fig5}
\end{figure}

\bigskip

\begin{proposition}\label{th4.2}
On the twisted SG,

\medskip

\ (i)\ the resistance metric $R$ is a uniform discrete metric on $W_*$, and ${\rm dist}_R(U_*, W_*) >0$;

 \medskip

 (ii) On $U_*$\ , for $x\in U_*$,  let $K_x$ be the largest subcell of $K$ that has $x$ as a vertex,  then
 \begin{equation} \label{eq4.4}
 R(x, y) \asymp |x-y|^{\log3/\log2} \quad \hbox {for} \quad y \in  U_* \cap K_x;
 \end{equation}
 on the other hand,  let $q \in W_k \setminus  W_{k-1}$ with two adjacent cells $K'_q, K''_q$ (as defined above), then
 \begin{equation} \label{eq4.5}
 R(x, y) \asymp \Big ( \frac 13 \Big )^k \quad \hbox {for} \quad  x\in U_n \cap K'_q, \ y \in U_n \cap K''_q, \  n \geq k.
 \end{equation}

\vspace {0.1cm}

Consequently, the completion  $\Omega = \overline {U_*} \cup W_*$,   where  $\overline {U_*}$  is  pathwise connected and locally connected, and is such that for each $q \in W_k \setminus  W_{k-1} $, $\overline {U_*}$ has two limit points $p'_q, p''_q $ with $R(p'_q, p''_q) \asymp \Big (\frac 13\Big )^k$.
\end{proposition}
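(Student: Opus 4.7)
The plan will be to exploit the dichotomy of scales from Lemma \ref{th4.1}: at level $n$, every strong edge has resistance $\asymp 3^{-n}$ while every weak edge has resistance $\asymp 1$. All bounds will reduce to producing paths (for upper bounds on $R$) and cuts (for lower bounds on $R$) that exploit this contrast.

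For Part (i), I will fix $q \in W_k$ and observe that at every level $n \geq k$, $q$ is the apex of a bounded number of $n$-cells containing it, so every edge of $\Gamma_n$ incident to $q$ is a weak edge with conductance $b_n \asymp 1$, and the number of such edges is bounded by some $M$ uniformly in $n$. The cut $\{q\}$ versus its complement in $V_n$ therefore has total conductance at most $M \sup_n b_n$, giving $R(q,p) \geq c_0 > 0$ for every $p \in V_* \setminus \{q\}$. This simultaneously yields uniform discreteness on $W_*$ and $\mathrm{dist}_R(U_*, W_*) > 0$.

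For Part (ii.a), I will let $K_x = F_\omega(K)$ with $|\omega|=m$, and for $y \in U_* \cap K_x$ define $\ell \geq m$ as the smallest level at which $x$ and $y$ lie in different $\ell$-subcells; then $|x-y| \asymp 2^{-\ell}$. The key structural fact, to be verified by induction on the refinement of $K_x$, is that the strong-edge skeleton remains connected on $U_* \cap K_x$ at every level (at level $1$ this is already visible: the three level-$1$ strong edges trace out the path from $p_2$ through the two $U_1$-midpoints to $p_3$). Using this, I will build a path from $x$ to $y$ that traverses $O(1)$ strong edges at each level $j \geq \ell$, giving $R(x,y) \lesssim \sum_{j \geq \ell} 3^{-j} \asymp 3^{-\ell}$. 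For the matching lower bound I will cut $K_x$ at level $\ell$ separating $x$ from $y$; the crossing edges are dominated by $O(1)$ strong edges of conductance $\asymp 3^{\ell}$, hence $R(x,y) \gtrsim 3^{-\ell}$. Together, $R(x,y) \asymp |x-y|^{\log 3 /\log 2}$.

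Part (ii.b) is the main novel point. Although $K'_q$ and $K''_q$ meet only at $q \in W_k$, I will argue that the twisted structure always furnishes a \emph{third} $k$-cell $K^\ast_q$ adjacent to both, whose level-$k$ strong edge has its two endpoints lying in $U_k \cap K'_q$ and $U_k \cap K''_q$ respectively (at level $1$, if $q$ is the midpoint of $\overline{p_2 p_3}$, then $K^\ast_q = F_1(K)$ and its strong edge joins the midpoints of $\overline{p_1 p_2}$ and $\overline{p_1 p_3}$, which are precisely the $U_1$-vertices shared with $K_2$ and $K_3$). Concatenating strong-edge subpaths inside $K'_q$, $K^\ast_q$, and $K''_q$ will produce a path of total resistance $\asymp 3^{-k}$, giving the upper bound $R(x,y) \lesssim 3^{-k}$. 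The lower bound will come from cutting $K'_q \cup K^\ast_q \cup K''_q$ so as to separate $x$ from $y$: the crossing bundle is dominated by the single strong edge of $K^\ast_q$ (conductance $\asymp 3^k$) together with $O(1)$ weak edges at $q$ and at the other shared vertices, for total crossing conductance $\asymp 3^k$, whence $R(x,y) \gtrsim 3^{-k}$. The ``consequently'' statement then follows directly: since $\mathrm{dist}_R(U_*, W_*) > 0$, the completion splits as $\Omega = \overline{U_*} \cup W_*$; for each $q \in W_k$ a Euclidean Cauchy sequence in $U_*$ approaching $q$ must eventually remain inside $K'_q$ or inside $K''_q$, yielding by (ii.b) two distinct $R$-limits $p'_q, p''_q \in \overline{U_*}$ with $R(p'_q, p''_q) \asymp 3^{-k}$, and pathwise and local connectedness of $\overline{U_*}$ reduce to the strong-skeleton connectivity of Part (ii.a). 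The hardest step will be verifying the existence of the auxiliary cell $K^\ast_q$ at arbitrary level $k$, which requires tracking the twisted identifications $F_i(p_j) = F_j(p_i)$ (up to the reflection swap) through the nested cell structure and carries the geometric content of the argument.
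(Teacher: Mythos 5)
Your proposal follows essentially the same route as the paper's proof: isolate $W_*$ via the tent--function/weak--edge bound at the apex vertices, use the connectivity of the strong--edge skeleton of $U_*$ together with $a_n^{-1}\asymp 3^{-n}$ for \eqref{eq4.4}, and route the connection between $K'_q$ and $K''_q$ through the third sibling cell for \eqref{eq4.5} (the paper phrases this as the geodesic being forced through $F_1(K)$ after rescaling to the case $k=1$ by the IFS). The only remark worth adding is that the step you flag as hardest is in fact immediate: $q\in W_k\setminus W_{k-1}$ forces $q=F_{\omega'}(p_{23})$ with $|\omega'|=k-1$, so $K^*_q$ is simply the third child $F_{\omega'1}(K)$ of the parent $(k-1)$-cell, whose strong edge joins the two vertices shared with $F_{\omega'2}(K)$ and $F_{\omega'3}(K)$.
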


\bigskip

\noindent {\bf Remark 4.1}:  The completion $\overline {U_*} $ can be realized as cutting  up the SG at each $ q\in W_*$, and bend the two subcells $K_q', K''_q$ apart at the cut points with the appropriate distance without breaking the SG (see Figure 6 at $q_1$).

\begin{figure}[h]
\textrm{\centering
\scalebox{0.18}[0.18]{\includegraphics{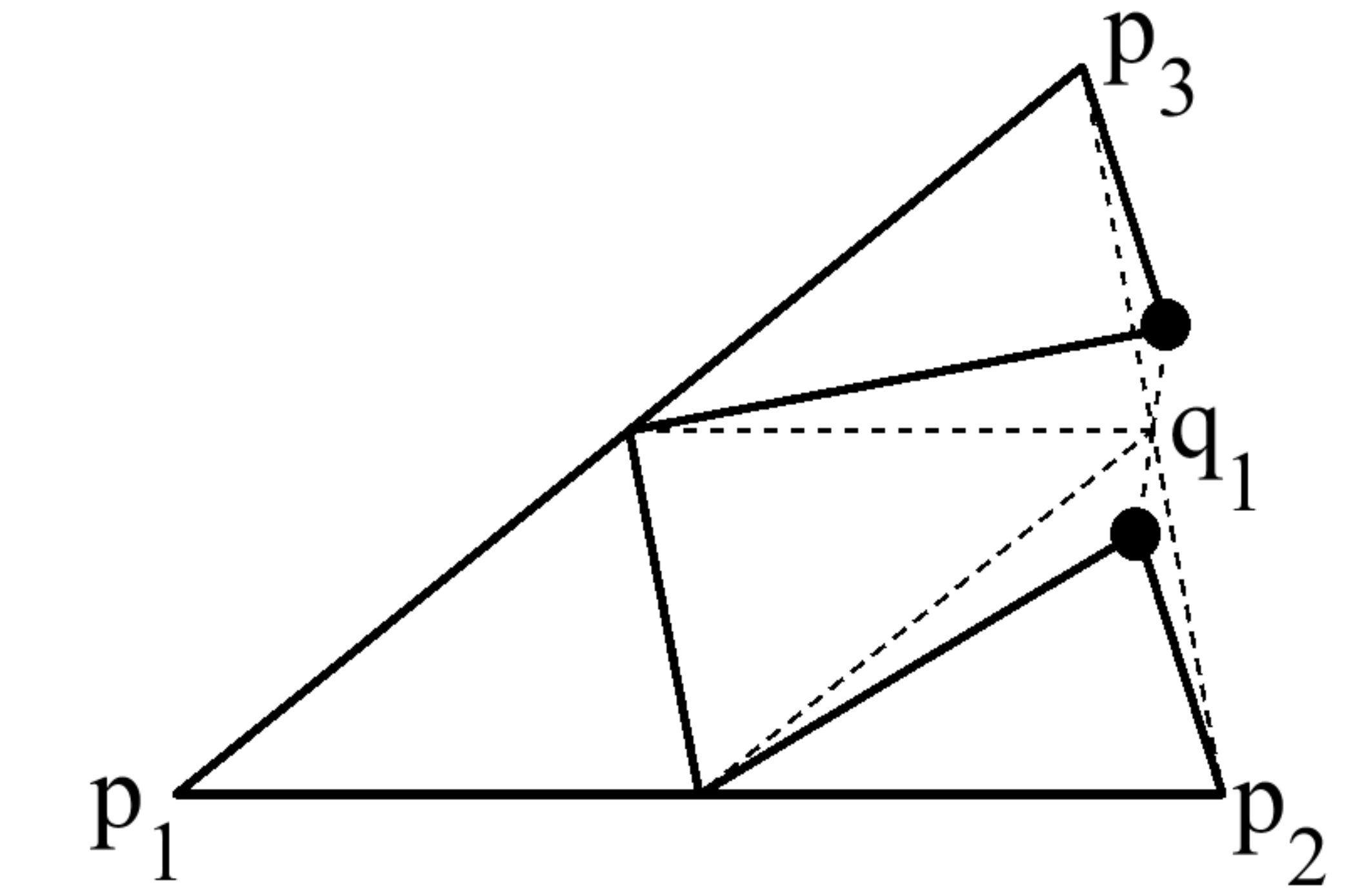}}
}

\caption{Completion of $ U_*$ at $q_1$; similarly at other $q\in W_n$ }

\label{fig6}
\end{figure}

\noindent \begin{proof} Recall that $ R(x,y)^{-1}:=\inf\{\mathcal{E}(u):\ u\in\ell (V_*), \  u(x)=1, u(y)=0\}$.
For  $x, y \in W_n$,  by using the tend functions, and observe that the effective resistance of the two nodes is $\geq b^{-1}_n/4 \asymp 1$,  we conclude  the resistance metric $R$ on $W_*$ has a uniform lower bound $>0$.  This implies $W_*$ is a uniform discrete metric space. Also by the same reason, we see that   ${\rm dist}_R (W_*, U_*) >0$.

\medskip

Next we consider the resistance metric on $U_n$. Let $K_n \subset K_x$ be the smallest subcell of $K$ that contains both $x, y$.  By using  $a_n^{-1} \asymp \Big (\frac 13\Big)^n$, and the path property of $U_n$, it is direct to prove the estimation of \eqref {eq4.4}.

To prove \eqref {eq4.5}, it suffices to consider the case  $q = q_1 \in W_1\setminus W_0$,  the midpoint of the line segment $\overline{p_2p_3}$, then use the IFS to move the argument to other $q\in W_*$. Let $p'_n \in U_n \cap F_2(K)$ that is a neighbor (in $V_n$) of $q_1$. Similarly, let  $p''_n \in U_n \cap F_3(K)$ that is a neighbor (in $V_n$) of $q_1$. Then  we have
$$
R(p'_n,  p''_n) \asymp R (p_2, p_3) \asymp 1.
$$
(For the above estimation,  note that the geodesic in $V_n$  joining $p'_n (\in F_2(K))$ and $p''_n (\in F_3(K))$ must pass through $F_1(K)$  (see Figure \ref{fig6}), so that $ R(p_2, p_3) \geq R(p'_n,  p''_n) \geq \frac 13 R(p_2, p_3)$.)

\medskip

Finally the statement of the completion $\overline U_*$  follows from \eqref{eq4.4} and \eqref{eq4.5}.  \end {proof}

\bigskip
 From the probabilistic point of view,
it will be interesting to understand  the corresponding diffusion process on $\overline {U_*}$ and $\Omega$. We also note that in \cite {HK}, Hambly and Kumagai studied this type of diffusion on the Vicsek set, they also observe that if one assigns resistance $a$ and $b$ on the side and diagonal edges with $a>b$, then the resistance between two diagonal lines on any $n$-cell has a uniform lower bound,  and thus a similar situation as the twisted SG occurs.

\bigskip

Next we list another example in \cite {GL} on which the recursive construction does not work. Let $K$ be a p.c.f. set as in Figure \ref{fig7}, which has three boundary points, and is  generated by an IFS  of 17 similitudes with contraction ratio $1/7$.  We call it  a {\it Sierpi\'nski Sickle}.
\begin{figure}[h]
\textrm{
\begin{tabular}{cc}
\begin{minipage}[t]{2.0in} \includegraphics[width=2.0in]{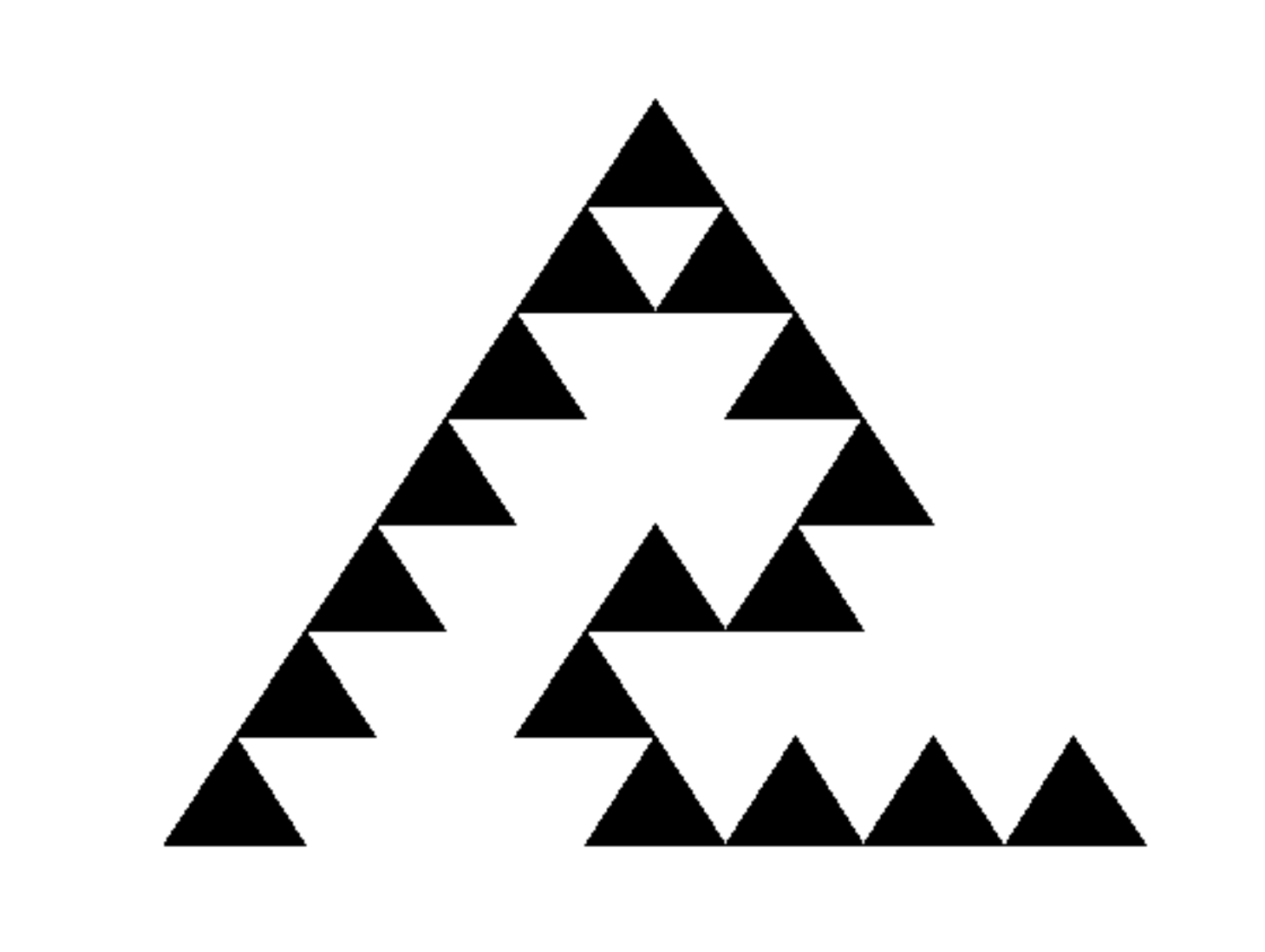}
 \end{minipage}
 \begin{minipage}[t]{2.0in}
\includegraphics[width=2.0in]{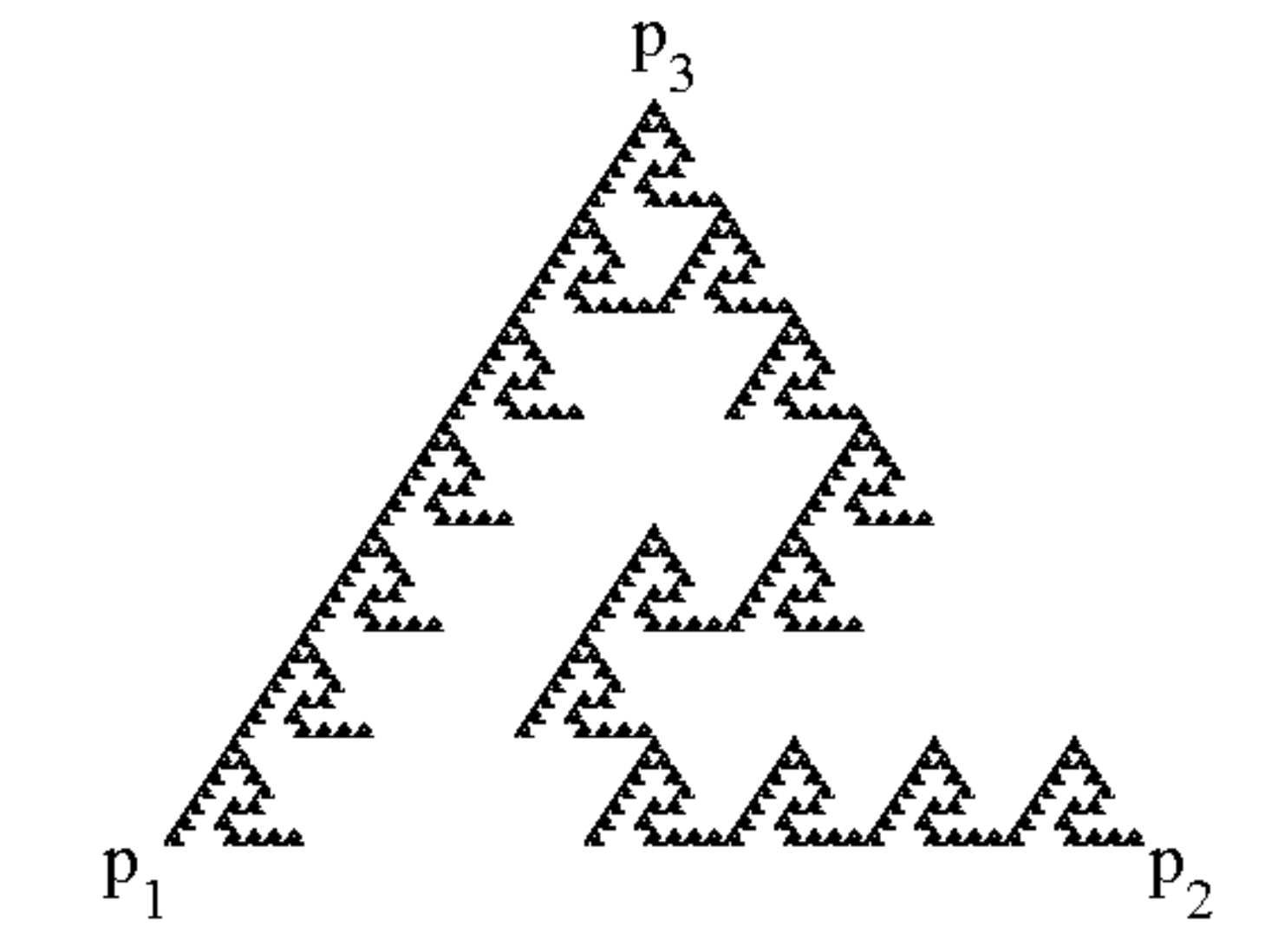}
\end{minipage} &
\end{tabular}
}\caption{The Sierpi\'nski  sickle $K$}\label{fig7}
\end{figure}

\bigskip

\begin {proposition}\label{th4.3}
 {\rm [GL]} For the Sierpi\'nski sickle, the recursive construction for any $(a_0, b_0 , c_0)$ does not give a compatible sequence of $\{(a_n, b_n, c_n)\}_{n=0}^\infty$. However, one can construct a Dirichlet form satisfying the energy self-similar identity.
\end{proposition}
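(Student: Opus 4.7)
The plan is to apply the $\Delta$-$Y$ framework of Section~2 to the sickle and to exhibit an algebraic obstruction in the resulting recursion; the positive part will follow from standard self-similar existence theory. For the negative claim, suppose $\{(a_n, b_n, c_n)\}_{n=0}^\infty$ were a compatible sequence of positive conductances on the sickle. By the reflection symmetry visible in Figure~\ref{fig7} (which interchanges two of the three boundary points), one may without loss of generality assume $b_n = c_n$ for all $n$; any asymmetric initial data would fail to respect the intrinsic symmetry of the cell arrangement and is ruled out immediately. Applying \eqref{eq2.1} to each of the $17$ cells of $\Gamma_1$ and reducing the resulting network on $V_1$ to the three boundary vertices $V_0$ by successive series/parallel combinations and $Y$-$\Delta$ operations, one obtains the analog of \eqref{eq1.4}: an explicit polynomial system relating $(x_n, y_n)$ to $(x_{n-1}, y_{n-1})$, where $(x_n, y_n, y_n)$ are the common $Y$-resistances of the cells. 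The first step is to carry out this reduction explicitly and write down the resulting map $T:(x_n, y_n)\mapsto(x_{n-1}, y_{n-1})$.

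The main step, and the chief technical obstacle, is to show that no positive sequence can solve the compatibility equations at every level. The strategy is to pass to the scale-invariant coordinate $t_n = y_n / x_n \in (0,\infty)$; compatibility forces a one-dimensional iteration $t_{n-1} = \Psi(t_n)$ that must admit a backward orbit $\{t_n\}_{n\geq 0}$ lying entirely in $(0,\infty)$. Establishing that no such orbit exists — for instance by showing that $\Psi$ has no fixed point in $(0,\infty)$, or more delicately that every branch of $\Psi^{-1}$ pushes a positive starting point out of the positive ray in finitely many iterations — then rules out the recursive construction for any initial $(a_0, b_0, c_0)>0$. The hard part is verifying this property of $\Psi$: it depends on the explicit formulas produced by the $17$-cell reduction and reflects the asymmetric way in which the three boundary points of the sickle are connected through the cells. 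I expect the dynamics of $\Psi$ to force the ratio $t_n$ to drift monotonically toward $0$ (or $\infty$) as $n$ increases, at a rate that prevents $x_n$ and $y_n$ from simultaneously remaining positive.

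For the positive part of the statement, one invokes Sabot's classification theorem \cite{SOB} (alternatively, Kigami's renormalization approach \cite{K}): the sickle, being a non-degenerate finitely-ramified self-similar set, admits a self-similar energy form
\[
\mathcal{E}(u) \ = \ \sum_{i=1}^{17} r_i^{-1}\,\mathcal{E}(u\circ F_i),
\]
where $(r_1,\dots,r_{17})$ is obtained as a positive eigenvector of the nonlinear renormalization operator on $\mathbb{R}_{>0}^{17}$ modulo scaling; its existence is secured by a Perron--Frobenius-type argument. Because the recursive construction by design assigns a single common conductance to all cells of the same level, a necessary condition for its success would be that a corresponding self-similar form has $r_1 = \cdots = r_{17}$; the computation above shows that this uniformity fails for the sickle, while Sabot's theorem still yields a (non-uniform) self-similar Dirichlet form.
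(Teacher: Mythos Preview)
Your proposal has a concrete error at the outset of the negative part. You claim that Figure~\ref{fig7} exhibits a reflection symmetry of the sickle interchanging two of the boundary points, and you use this to reduce to the case $b_n=c_n$. But the sickle is \emph{not} reflection-symmetric: the paper records that the $17$ cells split as $5$ on the left, $3$ on the top, and $9$ on the right, so no reflection can exchange two boundary vertices. Consequently the reduction to a two-variable recursion $(x_n,y_n)$ with $y_n=z_n$ is unjustified, and the whole analysis of the one-dimensional map $\Psi$ on $t_n=y_n/x_n$ collapses. One must work with the full three-variable system (the analog of \eqref{eq1.4} for this configuration) and show directly that its solutions fail to remain positive; this is what the paper asserts, deferring the computation to \cite{GL}. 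Even if the symmetry had been present, your sentence ``any asymmetric initial data would fail to respect the intrinsic symmetry \dots\ and is ruled out immediately'' is not an argument: in Lemma~\ref{th2.1} the necessity of $y_0=z_0$ for the SG was a nontrivial conclusion, not an a priori restriction.

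For the positive part, your appeal to Sabot's theorem or a Perron--Frobenius argument is not automatic for an arbitrary p.c.f.\ set; indeed the paper itself cites \cite{Pe2} for a p.c.f.\ example with no self-similar energy. The paper's approach is different and constructive: it writes down explicit renormalization factors $r_L,r_R,r_T$ (a one-parameter family in $k\geq 2$) obtained by solving the compatibility system directly. Finally, your closing remark --- that the recursive construction would require $r_1=\cdots=r_{17}$ --- confuses two issues: equal $r_i$'s are needed only for the recursive construction to yield a \emph{self-similar} form, whereas on the SG the recursive construction succeeds precisely by producing a non-self-similar one (Theorem~\ref{th1.2}). So this observation does not contribute to the negative claim.
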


\bigskip
For the recursive construction, the basic reason for no compatible sequence is that the solution similar to the system of equations \eqref{eq1.4} fails to be positive. For the self-similar case, we find the explicit renormalizing factors for $\mathcal{E}(u)= \sum_{i=1}^{17} {r_i}^{-1} \mathcal{E}(u\circ F_i)$. Let $r_L, r_R, r_T$ on the cells of $K$ be defined as follows:

\vspace{0.1cm}
$r_1=r_2=\cdots=r_5=r_{L}$\ \ on the left $5$ sub-triangles $F_1(K),F_2(K),\cdots,F_5(K)$;

\vspace {0.1cm}
$r_6=r_7=\tau_8=r_{T}$\ \  on the $3$ top sub-triangles $F_6(K),F_7(K),F_8(K)$;

\vspace {0.1cm}
$r_9=\tau_{10}=\cdots=_{17}=r_{R}$\ \  on the right $9$ sub-triangles $F_9(K),F_{10}(K),\cdots,F_{17}(K)$.

\medskip

\noindent Then  we can solve a system of equations and obtain, for $k \geq 2$,
\begin{align*}
  r_L =\frac{k(k-1)}{5(k^2+6k+3)}, \quad
  r_R =\frac{k^2-1}{(13k+5)(k^2+6k+3)}, \quad
  r_T =\frac{2(2k+1)}{k^2+6k+3}.
\end{align*}

\bigskip

\noindent {\bf Remark 4.2. } Note that  if the Dirichlet form from the recursive construction is self-similar, then all the renormalizing  factors $r_i$'s are equal. In \cite [Section 4.2] {GL}, we  have a p.c.f. set (the Vicsek eyebolted cross) that the Dirichlet forms from the recursive construction cannot satisfy the condition, hence they are all non-self-similar. Nevertheless, the self-similar Dirichlet forms can still be obtained similar to the above example.

\bigskip

\noindent {\bf Remark 4.3.}   We do not know to what extend the recursive construction and the dichotomy result can be extended to the other p.c.f. sets; also in view of the different situations on the SG and the previous examples, it will be nice to have some specific criteria on the more general fractals.

\bigskip

The original usage  of the above mentioned p.c.f sets  was to study  the critical exponents $\sigma^*$ of the Besov spaces $B^{\sigma}_{2,\infty} \subset L^2(K, \mu), \ \sigma >0$ (where $K\subset {\Bbb R}^d$ is  closed, and  $\mu$ is an $\alpha$-Ahlfors regular measure on $K$) in connection with the domain of the Dirichlet forms and the walk dimension (\cite {GL, KLW, KL}).  Recall that $B^\sigma_{2, \infty}$ has norm $||u||_{B^\sigma_{2, \infty}} = ||u||_2 + [u]_{B_{2, \infty}^\sigma}$ where
$$
[u]^2_{B_{2, \infty}^\sigma} = \sup_{0<r<1} r^{- 2\sigma} \int_K \Big (\frac 1{r^\alpha}\int_{B(x, r)} |u(x) -u(y)|^2 d\mu(y) \Big )\ d\mu(x).
$$
The critical exponent of the Besov spaces $\{B^\sigma_{2, \infty}\}_{\sigma >0}$ is defined to be
$$
\sigma^* =\sup \{\sigma:  \ B^\sigma_{2, \infty} \cap C(K) \hbox { is dense in}\ C(K)\}.
$$
For example, for the SG, $\sigma^* = \log 5/\log 2$  and $B^{\sigma^*}_{2, \infty} = {\mathcal F}$ \cite {J} (see also \cite{PP} for some nested fractals and \cite{HW} for p.c.f. fractals). In those cases,  when $\sigma > \sigma^*$, then $B^\sigma_{2, \infty}$ contents only constant functions. In \cite {GL}, we asked whether this is necessary, and investigate the relevance with the Dirichlet forms. For this we introduce another critical exponent
$$
\sigma^\# = \sup \{\sigma:  \ B^\sigma_{2, \infty}  \hbox { contains non-constant functions}\}
$$
and constructed the above example. It was shown that $ \sigma^* < \sigma^\#$ with  the explicit expressions of $\sigma^*$ and $\sigma^\#$.
%\begin{align*}
%\sigma^* = \frac 12 \Big (1+\frac{\log17}{\log7}\Big ), \qquad
%\sigma^\# =\frac 12 \Big (\frac{2\log17-\log2}{\log7}\Big );
%\end{align*}
Also $B^{\sigma^*}_{2, \infty} \ (\subset C(K)) $  is dense in $C(K)$, and $B^{\sigma^\#}_{2, \infty}$ is dense in $L^2(K, \mu)$ (but not dense in $C(K)$).  This Besov space  $B^{\sigma^*}_{2, \infty}$ does not support a local regular Dirichlet for $(\mathcal E,  {\mathcal F})$ with $ {\mathcal E}(u)\asymp ||u|| ^2_{B^{\sigma^*}_{2, \infty}} $ for all $u \in {\mathcal F}$.

\medskip

In all the known examples, the  critical  exponent $\sigma^*$ of the Besov spaces on $K$ equals to the walk dimension $d_w$ of $K$. This heuristic relation is not very intuitive as $\sigma^*$ is defined through the geometry of $K$, and the walk dimension is certain space-time exponent of the walk. Some of these aspects had been studied in \cite {GHL} in terms of the heat kernel.  It will be interesting to find out the more natural and direct connection of these exponents.

\bigskip

 The existence of a Dirichlet form on a fractal set still posts a fundamental and  challenging question. In \cite {Pe1}, Peirone proved there is a large class of p.c.f. self-similar sets (not necessary symmetric) possess  self-similar energy forms, and more recently, he  claimed an example of a p.c.f. set that does not admit such energy forms \cite {Pe2}.  It might be  worthwhile to  see if the recursive construction will produce a non-self-similar energy form in his example.   For the  non-p.c.f sets, it remains largely unknown for the existence of the Dirichlet forms.  Even for the Sierpin\'ski carpet, the construction is to use a probabilistic approach \cite {BB}, which is technically quite complicated, and surprisingly, there is no clear analytic approach on the discrete approximations yet.\\

\bigskip

\noindent {\bf Acknowledgement:}  The authors are indebted to  Mr. Meng Yang for many helpful discussions, in particular on the the semigroup of operators which led to Proposition 3.1 and Lemma 3.2.   They also thank Professors Hambly and Strichartz for the valuable discussions on the probabilistic counterpart, and brought their attention to the relevant references during a conference in Cornell.

\bigskip
\bigskip
%\bibliographystyle{siam}
%\bibliography{ref}
\bibliographystyle{siam}

\bigskip
\end{document}